\newtheorem{theorem}{Theorem}[section]
\newtheorem{lemma}[theorem]{Lemma}
\newtheorem{corollary}[theorem]{Corollary}
\newtheorem{proposition}[theorem]{Proposition}
\theoremstyle{remark}
\newtheorem{remark}[theorem]{Remark}
\theoremstyle{definition}
\newtheorem{definition}[theorem]{Definition}
\newtheorem{example}[theorem]{Example}
\numberwithin{equation}{section}
\newcounter{maintheorem}
\newcommand{\R}{\mathbb{R}}
\newcommand{\e}{\varepsilon}
\newcommand{\n}{\left\Vert\cdot\right\Vert}
\newcommand{\nn}[1]{{\left\vert\kern-0.25ex\left\vert\kern-0.25ex\left\vert #1 
\right\vert\kern-0.25ex\right\vert\kern-0.25ex\right\vert}}
\renewcommand{\leq}{\leqslant}
\renewcommand{\geq}{\geqslant}
\DeclareMathOperator{\suppt}{suppt}
\newcounter{smallromans}
\begin{document}
\title{Small semi-Eberlein compacta and inverse limits}

\author[C.~Correa]{Claudia Correa}
\address[C.~Correa]{Centro de Matem\'atica, Computa\c c\~ ao e Cogni\c c\~ ao, Universidade Federal do ABC, Avenida dos Estados, 5001, Santo Andr\' e, Brazil}
\email{claudiac.mat@gmail.com, claudia.correa@ufabc.edu.br}

\author[T.~Russo]{Tommaso Russo}
\address[T.~Russo]{Institute of Mathematics\\ Czech Academy of Sciences\\ \v{Z}itn\'a 25, 115 67 Prague 1\\ Czech Republic; and Department of Mathematics\\Faculty of Electrical Engineering\\Czech Technical University in Prague\\Technick\'a 2, 166 27 Prague 6\\ Czech Republic}
\email{russo@math.cas.cz, russotom@fel.cvut.cz}

\author[J.~Somaglia]{Jacopo Somaglia}
\address[J.~Somaglia]{Dipartimento di Matematica ``F. Enriques'' \\Universit\`a degli Studi di Milano\\Via Cesare Saldini 50, 20133 Milano\\Italy}
\email{jacopo.somaglia@unimi.it}

\thanks{Research of C.~Correa was supported by Funda\c{c}\~{a}o de Amparo \`a Pesquisa do Estado de S\~{a}o Paulo (FAPESP) grants 2018/09797-2 and 2019/08515-6.\\
Research of T.~Russo was supported by the GA\v{C}R project 20-22230L; RVO: 67985840 and by Gruppo Nazionale per l'Analisi Matematica, la Probabilit\`a e le loro Applicazioni (GNAMPA) of Istituto Nazionale di Alta Matematica (INdAM), Italy.\\ 
Research of J.~Somaglia was supported  by Universit\`a degli Studi di Milano, and by Gruppo Nazionale per l'Analisi Matematica, la Probabilit\`a e le loro Applicazioni (GNAMPA) of Istituto Nazionale di Alta Matematica (INdAM), Italy.}

\keywords{Semi-Eberlein compact space, Inverse system, retractional skeleton, semi-open retraction}
\subjclass[2020]{54D30, 54C15 (primary), and 54C10 (secondary).}
\date{\today}

\begin{abstract}
We study properties of semi-Eberlein compacta related to inverse limits. We concentrate our investigation on an interesting subclass of small semi-Eberlein compacta whose elements are obtained as inverse limits whose bonding maps are semi-open retractions.
\end{abstract}
\maketitle


\section{Introduction}

The notion of semi-Eberlein compact space was introduced by Kubi\'s and Leiderman in \cite{KL}, as a natural generalization of the classical notion of Eberlein compact. We say that a compact space $K$ is \emph{semi-Eberlein} if there exists a homeomorphic embedding $h:K\to\mathbb{R}^{\Gamma}$ such that $h^{-1}[c_0(\Gamma)]$ is dense in $K$, where
\[
c_0(\Gamma):=\{x\in \mathbb{R}^{\Gamma}\colon (\forall \varepsilon > 0) |\{\gamma\in \Gamma\colon |x(\gamma)|>\varepsilon\}| < \omega\}\subset \mathbb{R}^{\Gamma}.
\]
Clearly the class of semi-Eberlein compacta contains every Eberlein compact space and it is contained in the class of Valdivia compacta. It is easy to see that the generalized Cantor cube $2^{\kappa}$ is an example of a semi-Eberlein compact space that is not Eberlein and in \cite[Corollary~5.3]{KL} it was shown that the Valdivia space $[0, \omega_1]$ is not semi-Eberlein. It is worth mentioning that even though Eberlein and Valdivia compact spaces have been widely studied (see for example \cite{AL,BRW,DG,Ka,KM} and more recently \cite{CCS,CT,S}), the class of semi-Eberlein compacta has not been thoroughly investigated yet. Indeed, after its introduction in \cite{KL}, this class was studied only in the papers \cite{CCS} and \cite[Section 4.3]{HRST}.

The goal of this work is to investigate properties of semi-Eberlein compacta related to inverse limits. In \cite{KM} those properties were investigated in the context of Valdivia compact spaces. 
It was shown in \cite[Proposition~2.6]{KM} that every Valdivia compact space can be obtained as the inverse limit of a certain kind of inverse system. In Theorem \ref{semiEberlein-InverseLimit}, we present the semi-Eberlein version of this result. The key ingredient in the proof of Theorem \ref{semiEberlein-InverseLimit} is the characterization of semi-Eberlein compacta in terms of retractional skeletons presented in \cite[Theorem~B]{CCS}. Moreover in \cite{KM} a characterization of small Valdivia compact spaces via inverse limits was established as a consequence of \cite[Proposition~2.6]{KM} and \cite[Corollary~4.3]{KM}. More precisely, a small compact space is Valdivia if and only if it is the inverse limit of a continuous inverse system of compact metric spaces whose bonding maps are retractions. In Theorem \ref{semiEberlein-w1-Shrinking-InverseLimit}, we establish a version of this characterization in the context of small semi-Eberlein compacta. Recall that a topological space $K$ is said to be \emph{small} if its weight $w(K)$ is $\omega_1$.

Finally, inspired by a stability result for semi-Eberlein compact spaces presented in \cite{KL}, in Section \ref{sec:RS} we introduce a subclass of the class of small semi-Eberlein compacta. More precisely, it was shown in \cite[Theorem~4.2]{KL} that every inverse limit of a continuous inverse system of compact metric spaces whose bonding maps are semi-open retractions is semi-Eberlein. We define $\mathcal{RS}$ as the class comprising all such inverse limits\footnote{The notation for this class refers to `semi-open retractions' and it is inspired by classes $\mathcal{R}$ and $\mathcal{RC}$ considered in \cite{Ku}.}. Having in mind the aforementioned characterization of small Valdivia compacta, it was quite natural to conjecture that every small semi-Eberlein compactum belongs to $\mathcal{RS}$. Rather surprisingly, it turns out that this is not the case. For instance, in Section \ref{sec:RS} we show that $\mathcal{RS}$ does not even contain every small Eberlein compact space. More precisely, in Corollary \ref{scattered-corson}, we show that if $K$ is a nonmetrizable scattered Eberlein compact space, then $K$ does not belong to $\mathcal{RS}$. In Subsection \ref{subsec: stability-RS}, we present some stability results for the class $\mathcal{RS}$.

\section{Relations between semi-Eberlein compacta and inverse limits}\label{sec: 2}

In this section we establish relations between semi-Eberlein compacta and inverse limits. In order to do so, we use the characterization of semi-Eberlein spaces in terms of retractional skeletons presented in \cite{CCS} and explore the deep connection between retractional skeletons and inverse limits. To understand this connection, we need to recall some notions and properties of those objects. 
Here all topological spaces are assumed to be Hausdorff and the following monographs contain basic definitions and results that are used without specific reference:  \cite{E}, \cite{J}, and \cite{KKL}.
Recall that an up-directed partially ordered set $\Sigma$ is said to be \emph{$\sigma$-complete} if every countable and up-directed subset of $\Sigma$ admits supremum in $\Sigma$; equivalently, every increasing sequence in $\Sigma$ admits supremum in $\Sigma$.

\begin{definition}\label{def: r-skeleton}
A \emph{retractional skeleton} on a compact space $K$ is a family of continuous internal retractions $\mathfrak s=(R_s)_{s \in \Sigma}$ on $K$ indexed by an up-directed and $\sigma$-complete partially ordered set $\Sigma$, such that:
\begin{enumerate}
\item[(i)]\label{metric-image} $R_{s}[K]$ is a metrizable compact space, for every $s\in\Sigma$,
\item[(ii)]\label{images-increasing-kernel-decreasing} if $s,t\in\Sigma$ and $s\le t$ then $R_{s}=R_t\circ R_s=R_s\circ R_t$,
\item[(iii)] \label{skeleton-continuity}given an increasing sequence $(s_n)_{n\in\omega}$ in $\Sigma$, if $s=\sup_{n\in\omega}s_{n}\in\Sigma$, then $R_{s}(x)=\lim_{n\to \infty}R_{s_{n}}(x)$, for every $x\in K$,
\item[(iv)] \label{induced-subset-big} for every $x\in K$, $x=\lim_{s\in\Sigma}R_{s}(x)$.
\end{enumerate}
We say that $\bigcup_{s \in \Sigma}R_s[K]$ is the set \emph{induced} by the retractional skeleton $\mathfrak s$ and we denote it by $D(\mathfrak{s})$.
\end{definition}

Note that condition (iv) implies that $D(\mathfrak{s})$ is dense in $K$.

\begin{definition}\label{def:inverse-system}
An \emph{inverse system} of compact spaces indexed by an up-directed partially ordered set $\Sigma$ is a pair $\mathbb{S}=\big((K_s)_{s \in \Sigma}, (p_{s}^{t})_{s \le t}\big)$, where $K_s$ is a compact space, for every $s \in \Sigma$, $p_{s}^{t}:K_t \to K_s$ is a continuous map, for every $s, t \in \Sigma$ with $s \le t$ and the following conditions are satisfied
\begin{enumerate}
    \item[(i)] $p_{s}^{s}$ is the identity of $K_s$, for every $s \in \Sigma$;
    \item[(ii)] If $s_1 \le s_2 \le s_3$, then $p_{s_1}^{s_3}= p_{s_1}^{s_2} \circ p_{s_2}^{s_3}$.
\end{enumerate}
The maps $p_{s}^{t}$ are called \emph{bonding maps}. We say that a pair $\big(K, (p_s)_{s \in \Sigma}\big)$ is a \emph{cone} over $\mathbb{S}$ if $K$ is a compact space, $p_s: K \to K_s$ is a continuous map, for every $s \in \Sigma$ and it holds that $p_s=p_{s}^{t} \circ p_t$, for every $s \le t$. The maps $p_s$ are called \emph{projections}. An \emph{inverse limit} of $\mathbb{S}$ is a cone $\big(K, (p_s)_{s \in \Sigma}\big)$ over $\mathbb{S}$ such that given any cone $\big(L, (q_s)_{s \in \Sigma}\big)$ over $\mathbb{S}$, there exists a unique continuous function $f:L \to K$ such that $q_s=p_s \circ f$, for every $s \in \Sigma$. 
\end{definition}

Recall that every inverse system of compact spaces admits an inverse limit, which is unique up to homeomorphisms (more precisely, cone homeomorphisms). Throughout this work, we will always assume that the bonding maps are onto. This implies that the projections from the inverse limit are also onto \cite[Corollary~3.2.15]{E}. It is not hard to see that a cone $\big(K, (p_s)_{s \in \Sigma}\big)$ over an inverse system of compact spaces $\mathbb{S}$ is the inverse limit of $\mathbb{S}$ if and only if the family $\{p_s: s \in \Sigma\}$ separates the points of $K$. Indeed, this follows from the explicit representation of the inverse limit of $\mathbb{S}=\big((K_s)_{s \in \Sigma}, (p_{s}^{t})_{s \le t}\big)$ as
\[K_\mathbb{S}=\{(x_s)_{s \in \Sigma} \in \Pi_{s \in \Sigma}K_s: p_{s}^{t} (x_t)=x_s, \ \forall s \le t\},\]
equipped with the usual projections.
Note that if $\mathbb{S}=\big((K_s)_{s \in \Sigma}, (p_{s}^{t})_{s \le t}\big)$ is an inverse system and $T$ is an up-directed subset of $\Sigma$, then $\mathbb{S}|_T:=\big((K_s)_{s \in T}, (p_{s}^{t})_{s \le t}\big)$ is again an inverse system. Moreover if $T$ is a cofinal subset of $\Sigma$
and $\big(K, (p_s)_{s \in \Sigma}\big)$ is the inverse limit of $\mathbb{S}$, then $\big(K, (p_t)_{t \in T}\big)$ is the inverse limit of $\mathbb{S}|_{T}$. Here a central role is played by continuous and $\sigma$-complete inverse systems. We say that an inverse system $\mathbb{S}=\big((K_s)_{s \in \Sigma}, (p_{s}^{t})_{s \le t}\big)$ is \emph{continuous} if for every up-directed subset $T$ of $\Sigma$ that admits supremum $t=\sup T$ in $\Sigma$, it holds that $\big(K_t, (p_{s}^{t})_{s \in T}\big)$ is the inverse limit of $\mathbb{S}|_T$. We say that $\mathbb{S}$ is \emph{$\sigma$-complete} if $\Sigma$ is $\sigma$-complete and for every countable and up-directed subset $T$ of $\Sigma$ with $t=\sup T$ in $\Sigma$, it holds that $\big(K_t, (p_{s}^{t})_{s \in T}\big)$ is the inverse limit of $\mathbb{S}|_T$.

The theory developed in \cite{CCS} allows us to show in Lemma \ref{inverse-system-canonical-retractions} that every compact space that admits a retractional skeleton is the inverse limit of a continuous inverse system such that the bonding maps and the projections are retractions with nice properties. The key ingredient of Lemma \ref{inverse-system-canonical-retractions} is the notion of canonical retractions associated to suitable models that was introduced in \cite[Definition~12]{CCS}. 

\begin{lemma}\label{inverse-system-canonical-retractions}
Let $K$ be a compact space with weight $\kappa$ and $\mathfrak{s}=(R_s)_{s \in \Sigma}$ be a retractional skeleton on $K$. 
\begin{enumerate}
\item \label{exists-canonical-retraction} There exists a family of sets $(M_\alpha)_{\alpha \in \kappa}$ satisfying conditions (Ra)-(Rd) of \cite[Proposition~17]{CCS} and for each $\alpha \in \kappa$, there exists the canonical retraction $r_\alpha:K \to K$ associated to $M_\alpha$, $K$ and $D(\mathfrak{s})$. Moreover, conditions (R1)-(R10) of \cite[Proposition~17]{CCS} are satisfied.
\item \label{new-part} For every $\alpha \in \kappa$, set $K_\alpha=r_\alpha[K]$ and for every $\alpha, \beta \in \kappa$ with $\alpha \le \beta$, define $r_{\alpha}^{\beta}: K_\beta \to K_\alpha$ as $r_{\alpha}^{\beta}=r_\alpha|_{K_\beta}$. Then $\mathbb{S}=\big((K_\alpha)_{\alpha \in \kappa}, (r_{\alpha}^{\beta}) _{\alpha \le \beta}\big)$ is a continuous inverse system of compact spaces whose bonding maps are retractions, $\big(K, (r_\alpha)_{\alpha \in \kappa}\big)$ is the inverse limit of $\mathbb{S}$ and $w(K_\alpha)\le  \max(\omega, \vert \alpha \vert)$, for every $\alpha \in \kappa$.
\end{enumerate}
\end{lemma}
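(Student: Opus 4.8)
The plan is to leverage the structural properties (R1)–(R10) provided by part (1) and the canonical retractions $r_\alpha$ associated to the models $M_\alpha$. Part (1) is essentially a citation to \cite[Proposition~17]{CCS}, so the real content is part (2): verifying that the prescribed data assemble into a continuous inverse system and that $(K,(r_\alpha)_{\alpha\in\kappa})$ is its inverse limit. First I would check the two axioms of Definition \ref{def:inverse-system} for $\mathbb{S}=\big((K_\alpha)_{\alpha\in\kappa},(r_\alpha^\beta)_{\alpha\le\beta}\big)$. The identity condition $r_\alpha^\alpha=\mathrm{id}_{K_\alpha}$ is immediate since $r_\alpha$ restricted to its own image $K_\alpha=r_\alpha[K]$ is the identity (retractions fix their image). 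For the compatibility $r_{\alpha_1}^{\alpha_3}=r_{\alpha_1}^{\alpha_2}\circ r_{\alpha_2}^{\alpha_3}$ when $\alpha_1\le\alpha_2\le\alpha_3$, I expect conditions (R1)–(R10) to supply the commutation relation $r_{\alpha_1}=r_{\alpha_1}\circ r_{\alpha_2}$ for $\alpha_1\le\alpha_2$ (the analogue of Definition \ref{def: r-skeleton}(ii)); restricting this identity to $K_{\alpha_3}$ yields exactly what is needed. The same commutation relation shows that each $r_\alpha^\beta=r_\alpha|_{K_\beta}$ indeed maps $K_\beta$ into $K_\alpha$ and that the bonding maps are onto retractions.

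Next I would verify that $\big(K,(r_\alpha)_{\alpha\in\kappa}\big)$ is a cone over $\mathbb{S}$, which amounts to the relation $r_\alpha=r_\alpha^\beta\circ r_\beta$ for $\alpha\le\beta$. Since $r_\alpha^\beta=r_\alpha|_{K_\beta}$ and $r_\beta[K]=K_\beta$, this is again a restatement of $r_\alpha=r_\alpha\circ r_\beta$, available from (R1)–(R10). To conclude that this cone is the inverse limit, I would invoke the criterion recalled in the excerpt: a cone is the inverse limit precisely when the family of projections $\{r_\alpha:\alpha\in\kappa\}$ separates the points of $K$. This separation is where the defining property of a retractional skeleton enters decisively: condition (iv) of Definition \ref{def: r-skeleton} guarantees $x=\lim_{s\in\Sigma}R_s(x)$, and the canonical retractions are built so that $\bigcup_\alpha K_\alpha$ is dense and the $r_\alpha$ recover points in the limit. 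Concretely, if $x\neq y$ in $K$, some $r_\alpha$ must separate them; I would extract this from the fact (among (R1)–(R10)) that $\sup_\alpha r_\alpha=\mathrm{id}_K$ in the appropriate pointwise-convergence sense, together with the metrizability of each $K_\alpha$ allowing points to be distinguished at some countable stage.

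For the continuity of $\mathbb{S}$, I would take an up-directed subset $T\subseteq\kappa$ with supremum $\beta=\sup T$ and show $\big(K_\beta,(r_\alpha^\beta)_{\alpha\in T}\big)$ is the inverse limit of $\mathbb{S}|_T$. By the separation criterion this reduces to showing that $\{r_\alpha^\beta:\alpha\in T\}$ separates the points of $K_\beta$, equivalently that $r_\beta(x)$ is determined by the values $r_\alpha(x)$ for $\alpha\in T$. Here I expect to use a continuity-type clause from (R1)–(R10) asserting that for increasing (or up-directed) families with supremum, $r_\beta=\lim_{\alpha\in T}r_\alpha$ pointwise — the model-theoretic analogue of Definition \ref{def: r-skeleton}(iii), which holds because the models satisfy $M_\beta=\overline{\bigcup_{\alpha\in T}M_\alpha}$ for $\beta=\sup T$ (a consequence of conditions (Ra)–(Rd)). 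Finally, the weight estimate $w(K_\alpha)\le\max(\omega,|\alpha|)$ should follow directly from one of the listed conditions bounding the size (or density character) of $M_\alpha$ by $\max(\omega,|\alpha|)$, since $K_\alpha=r_\alpha[K]$ is a continuous image whose weight is controlled by that of the defining model.

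The main obstacle I anticipate is not any single computation but the bookkeeping of matching the abstract skeleton axioms of Definition \ref{def: r-skeleton} against the concrete properties (R1)–(R10) of the canonical retractions from \cite{CCS}: one must identify precisely which listed property yields the commutation $r_\alpha=r_\alpha\circ r_\beta$, which yields pointwise continuity at suprema, and which yields the point-separation needed for the inverse-limit identifications at both the global stage $\kappa$ and at every intermediate supremum $\beta=\sup T$. Once those correspondences are pinned down, each verification is a short restriction-of-identities argument, and the only genuinely delicate point is ensuring the continuity clause holds for arbitrary up-directed $T$ (not merely countable increasing sequences), which is exactly what distinguishes a \emph{continuous} inverse system and which I expect to follow from the density property $M_\beta=\overline{\bigcup_{\alpha\in T}M_\alpha}$ of the underlying models.
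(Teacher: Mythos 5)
Your proposal is correct and follows essentially the same route as the paper: part (1) is a direct appeal to Skolem's Theorem and \cite[Proposition~17]{CCS}, and part (2) is verified by matching the inverse-system axioms, the cone identity, point-separation (hence the inverse-limit property, using that the projections are onto), the continuity at suprema, and the weight bound against the listed properties of the canonical retractions (the paper cites (R1), (R2), (R3) and (R5) respectively for exactly the facts you anticipate needing). The only difference is that the paper pins down which (R$n$) delivers each ingredient, whereas you leave that bookkeeping open, but the logical structure of the argument is identical.
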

\begin{proof}
Item \eqref{exists-canonical-retraction} follows from Skolem's Theorem (\cite[Theorem~4]{CCS}) and \cite[Proposition~17]{CCS}. 
Now let us prove \eqref{new-part}. Clearly, (R1) implies that $\mathbb{S}$ is an inverse system whose bonding maps are retractions and (R5) ensures that $w(K_\alpha) \le \max(\omega, \vert \alpha \vert)$. Note that (R2) ensures that the family $\{r_\alpha: \alpha \in \kappa\}$ separates the points of $K$, which implies that $\big(K, (r_\alpha)_{\alpha \in \kappa}\big)$ is the inverse limit of $\mathbb{S}$, since it is a cone for $\mathbb{S}$ and the projections are onto. Finally, using (R3) and a similar argument, we conclude that $\mathbb S$ is continuous. 
\end{proof}

A fundamental concept involved in the proofs of Theorems \ref{semiEberlein-InverseLimit} and \ref{semiEberlein-w1-Shrinking-InverseLimit} is the shrinkingness of a retractional skeleton that was introduced in \cite[Definition~28]{CCS}. Recall that given a retractional skeleton $\mathfrak{s}=(R_s)_{s \in \Sigma}$ on a compact space $K$, a bounded subset $\mathcal{A}$ of $C(K)$ and a subset $D$ of $K$, we say that $\mathfrak{s}$ is \emph{$\mathcal{A}$-shrinking with respect to $D$} if for every $x \in D$ and every increasing sequence $(s_n)_{n \in \omega}$ in $\Sigma$, if $s=\sup_{n \in \omega}s_n$, then $\lim_{n \to \infty} \sup_{f \in \mathcal{A}} \vert f(R_{s_n}(x))-f(R_s(x))\vert=0$. As usual, $C(K)$ denotes the Banach space of real-valued continuous functions defined on the compact space $K$, endowed with the supremum norm.

\begin{theorem}\label{semiEberlein-InverseLimit}
If $K$ is a semi-Eberlein compact space with weight $\kappa$, then there exists a continuous inverse system of compact spaces $\mathbb S=\big((K_\alpha)_{\alpha \in \kappa}, (r_{\alpha}^{\beta})_{\alpha \le \beta}\big)$ such that $K$ is the inverse limit of $\mathbb S$, each $r_{\alpha}^{\beta}$ is a retraction and each $K_\alpha$ is semi-Eberlein with $w(K_\alpha) \le\max(\omega, \vert \alpha \vert)$.
\end{theorem}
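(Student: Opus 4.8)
The plan is to read off everything except the semi-Eberlein-ness of the factors from Lemma \ref{inverse-system-canonical-retractions}, and to obtain the latter by descending the skeletal characterization of \cite[Theorem~B]{CCS} to each $K_\alpha$. Concretely, I would first apply \cite[Theorem~B]{CCS} to fix on $K$ a retractional skeleton $\mathfrak{s}=(R_s)_{s\in\Sigma}$ together with a bounded, point-separating family $\mathcal{A}\subseteq C(K)$ witnessing that $K$ is semi-Eberlein, that is, such that $\mathfrak{s}$ is $\mathcal{A}$-shrinking with respect to $D(\mathfrak{s})$. Feeding this $\mathfrak{s}$ into Lemma \ref{inverse-system-canonical-retractions} yields the models $(M_\alpha)_{\alpha\in\kappa}$, the canonical retractions $r_\alpha$, the compacta $K_\alpha=r_\alpha[K]$, and the continuous inverse system $\mathbb{S}=\big((K_\alpha)_{\alpha\in\kappa},(r_\alpha^\beta)_{\alpha\le\beta}\big)$ with retraction bonding maps, having $\big(K,(r_\alpha)_{\alpha\in\kappa}\big)$ as inverse limit and with $w(K_\alpha)\le\max(\omega,\vert\alpha\vert)$. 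At this point every claim of the theorem is in place save that each $K_\alpha$ is semi-Eberlein, so the whole proof reduces to producing, for each $\alpha$, a retractional skeleton on $K_\alpha$ that witnesses semi-Eberlein-ness through \cite[Theorem~B]{CCS}.

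For this reduction I would exploit that $r_\alpha$ is the canonical retraction attached to the model $M_\alpha$, so it interacts well with $\mathfrak{s}$ via conditions (R1)--(R10) of \cite[Proposition~17]{CCS}. These commutation and absorption properties let me show that the retractions $R_s$ whose index $s$ lies in the $\sigma$-complete directed set $\Sigma_\alpha\subseteq\Sigma$ determined by $M_\alpha$ restrict to internal retractions of $K_\alpha$ and assemble into a retractional skeleton $\mathfrak{s}_\alpha=(R_s|_{K_\alpha})_{s\in\Sigma_\alpha}$ on $K_\alpha$ with induced set $D(\mathfrak{s}_\alpha)=D(\mathfrak{s})\cap K_\alpha$. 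As the accompanying separating family I would take $\mathcal{A}_\alpha=\{f|_{K_\alpha}:f\in\mathcal{A}\cap M_\alpha\}$; this family is bounded and separates the points of $K_\alpha$, the separation following from the fact that $\mathcal{A}$ separates the points of $K$ together with the elementarity-type properties (Ra)--(Rd) of $M_\alpha$. These are exactly the ingredients required by \cite[Theorem~B]{CCS}, provided $\mathfrak{s}_\alpha$ is shown to be $\mathcal{A}_\alpha$-shrinking with respect to $D(\mathfrak{s}_\alpha)$.

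The crux, and the step I expect to be the main obstacle, is this last shrinking estimate. Given $x\in D(\mathfrak{s}_\alpha)$ and an increasing sequence $(s_n)$ in $\Sigma_\alpha$ with supremum $s$, I must bound $\sup_{g\in\mathcal{A}_\alpha}\vert g(R_{s_n}(x))-g(R_s(x))\vert$. Writing $g=f|_{K_\alpha}$ with $f\in\mathcal{A}\cap M_\alpha$ and noting that $R_{s_n}(x),R_s(x)\in K_\alpha$, each such term equals $\vert f(R_{s_n}(x))-f(R_s(x))\vert$, so the supremum over $\mathcal{A}_\alpha$ is dominated by the supremum over all of $\mathcal{A}$; since $x\in D(\mathfrak{s}_\alpha)\subseteq D(\mathfrak{s})$, the $\mathcal{A}$-shrinkingness of $\mathfrak{s}$ forces the latter to tend to $0$. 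The delicate points to verify carefully are that $\Sigma_\alpha$ is genuinely $\sigma$-complete and that the supremum of $(s_n)$ computed in $\Sigma_\alpha$ coincides with the one computed in $\Sigma$, so that $R_s$ is the right map and the hypothesis of the shrinking definition is literally met; both should follow from the $\sigma$-completeness and continuity clause (R3) already invoked in Lemma \ref{inverse-system-canonical-retractions}, together with the closure properties of $M_\alpha$ recorded in (Ra)--(Rd). Granting this, $\mathfrak{s}_\alpha$ and $\mathcal{A}_\alpha$ satisfy \cite[Theorem~B]{CCS}, each $K_\alpha$ is semi-Eberlein, and the proof is complete.
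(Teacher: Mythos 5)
Your overall strategy coincides with the paper's: feed the skeleton into Lemma \ref{inverse-system-canonical-retractions} to obtain the inverse system, then push the characterization of \cite[Theorem~B]{CCS} down to each $K_\alpha$. However, there is a genuine gap at your very first step, and it propagates. You quote the characterization as producing a skeleton $\mathfrak{s}$ that is $\mathcal{A}$-shrinking with respect to $D(\mathfrak{s})$, but \cite[Theorem~B]{CCS} only provides a dense subset $D\subseteq D(\mathfrak{s})$ (together with the further closure properties recorded as conditions (a) and (b) of Theorem~B(2)) with respect to which $\mathfrak{s}$ is $\mathcal{A}$-shrinking; $D$ need not be all of $D(\mathfrak{s})$, and since $x\mapsto\sup_{f\in\mathcal{A}}\vert f(R_{s_n}(x))-f(R_s(x))\vert$ is merely a supremum of continuous functions, shrinkingness on a dense set does not upgrade to shrinkingness on all of $D(\mathfrak{s})$. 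Consequently your ``crux'' computation only establishes shrinkingness of $\mathfrak{s}_\alpha$ with respect to $D\cap K_\alpha$, not with respect to $D(\mathfrak{s}_\alpha)$, and to apply Theorem~B to $K_\alpha$ you must additionally prove that $D_\alpha:=D\cap K_\alpha$ is dense in $K_\alpha$. This is the step your proposal is missing: the paper obtains it from (R8) together with condition (b) of Theorem~B(2), which yield $r_\alpha[D]\subseteq D_\alpha$, and density then follows because $r_\alpha[D]$ is dense in $K_\alpha=r_\alpha[K]$. You would also need to verify that the restricted data again satisfy condition (b) (closure of $D_\alpha$ under the limits $\lim_{s\in\Sigma'}R_s(x)$ for up-directed $\Sigma'\subseteq(\Sigma\cap M_\alpha)_{\sigma}$), which once more uses condition (b) for $D$ on $K$; shrinkingness alone is not the full hypothesis of Theorem~B outside the small case treated in Proposition \ref{semiEberleinCharcterizatio-w1}.

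A secondary, repairable point: you take $\mathcal{A}_\alpha=\{f|_{K_\alpha}:f\in\mathcal{A}\cap M_\alpha\}$ and invoke elementarity for separation, which forces you to arrange $\mathcal{A}\in M_\alpha$ and to argue via (R10). The paper simply takes $\mathcal{A}_\alpha=\{f|_{K_\alpha}:f\in\mathcal{A}\}$, for which separation of points of $K_\alpha\subseteq K$ is immediate and the domination of the supremum in the shrinking estimate is unchanged.
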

\begin{proof}
Let $\mathfrak s=(R_s)_{s \in \Sigma}$ be the retractional skeleton on $K$, $D \subset D(\mathfrak{s})$ be the dense subset of $K$ and $\mathcal{A}$ be the bounded and separating subset of $C(K)$ satisfying conditions (a) and (b) of \cite[Theorem~B(2)]{CCS} and let $(M_\alpha)_{\alpha \in \kappa}$, $(r_\alpha)_{\alpha \in \kappa}$ and $\mathbb{S}=\big((K_\alpha)_{\alpha \in \kappa}, (r_{\alpha}^{\beta})_{\alpha \le \beta}\big)$ be given by Lemma \ref{inverse-system-canonical-retractions}. To conclude the result, it remains to prove that each $K_\alpha$ is semi-Eberlein. Fixed $\alpha \in \kappa$, it follows from (R5) that $\mathfrak{s}_\alpha:=(R_s|_{K_\alpha})_{s \in (\Sigma \cap M_\alpha)_{\sigma}}$ is a retractional skeleton on $K_\alpha$ with $D(\mathfrak{s}_\alpha)=D(\mathfrak s) \cap K_\alpha$. Moreover if we set $D_\alpha=D \cap K_\alpha$, then it is clear that $D_\alpha \subset D(\mathfrak{s}_\alpha)$. Note that (R8) and condition (b) of \cite[Theorem~B(2)]{CCS} ensure that $r_\alpha[D] \subset D_\alpha$, which implies that $D_\alpha$ is dense in $K_\alpha$, since $r_\alpha[D]$ is dense in $K_\alpha$. It follows from condition (b) of \cite[Theorem~B(2)]{CCS}  that for every $x \in D_\alpha$ and every up-directed subset $\Sigma'$ of $(\Sigma \cap M_\alpha)_{\sigma}$ it holds that $\lim_{s \in \Sigma'} R_s(x) \in D_\alpha$. Finally, set $\mathcal{A}_\alpha=\{f|_{K_\alpha}: f \in \mathcal A\}$. It is clear that $\mathcal{A}_\alpha$ is a bounded and separating subset of $C(K_\alpha)$ and it is easy to see that $\mathfrak{s}_\alpha$ is $\mathcal{A}_\alpha$-shrinking with respect to $D_\alpha$. Therefore, \cite[Theorem~B]{CCS} ensures that $K_\alpha$ is semi-Eberlein.
\end{proof}

In Proposition \ref{semiEberleinCharcterizatio-w1} below we show that, for small compacta, the characterization presented in \cite[Theorem~B]{CCS} can be simplified. The result was present in the first version of \cite{CCS} \footnote{Still available at \href{https://arxiv.org/abs/2009.07902v1}{\texttt{arXiv:2009.07902v1}}.}; we are grateful to Marek C\'uth for his permission to insert it here.

\begin{proposition}\label{semiEberleinCharcterizatio-w1}
Let $K$ be a small compact space. Then the following conditions are equivalent:
\begin{enumerate}
    \item[(i)] $K$ is semi-Eberlein.
    \item[(ii)] There exist a bounded and separating subset $\mathcal{A}$ of $C(K)$, a retractional skeleton $\mathfrak{s}=(R_s)_{s \in \Sigma}$ on $K$ and a dense subset $D$ of $K$ such that $D \subset D(\mathfrak{s})$ and $\mathfrak{s}$ is $\mathcal{A}$-shrinking with respect to $D$.
\end{enumerate}
\end{proposition}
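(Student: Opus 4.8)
The plan is to prove the two implications separately, with the forward direction being immediate and the reverse direction carrying all the weight.

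For (i)$\Rightarrow$(ii) this is nothing more than an observation. By \cite[Theorem~B]{CCS}, if $K$ is semi-Eberlein then there exist a bounded separating $\mathcal{A}\subseteq C(K)$, a retractional skeleton $\mathfrak{s}=(R_s)_{s\in\Sigma}$ and a dense $D\subseteq D(\mathfrak{s})$ satisfying both the shrinking condition (a) (that $\mathfrak{s}$ is $\mathcal{A}$-shrinking with respect to $D$) and the additional closure condition (b). Since (ii) requests only condition (a), it holds a fortiori.

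The substance is the reverse implication (ii)$\Rightarrow$(i), where the point is to upgrade the single shrinking hypothesis to the full list of hypotheses of \cite[Theorem~B]{CCS} by exploiting $w(K)=\omega_1$. First I would reduce to the case in which the skeleton is indexed by the ordinal $\omega_1$ viewed as a continuous increasing chain. Using that $w(K)=\omega_1$, a closing-off argument of length $\omega_1$ produces an increasing chain $(s_\alpha)_{\alpha<\omega_1}$ in $\Sigma$, continuous (at limits one sets $s_\lambda=\sup_{\beta<\lambda}s_\beta$, which exists by $\sigma$-completeness since $\cf(\lambda)=\omega$), chosen so that $D\subseteq\bigcup_{\alpha<\omega_1}R_{s_\alpha}[K]$ and so that the family $\{R_{s_\alpha}:\alpha<\omega_1\}$ separates the points of $K$. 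The crucial observation is that separation of points forces condition (iv) of Definition~\ref{def: r-skeleton} for the subfamily: for $x\in K$ the net $R_{s_\alpha}(x)$ converges, say to $y$, and applying any $R_{s_\beta}$ and using property~(ii) of Definition~\ref{def: r-skeleton} one gets $R_{s_\beta}(y)=\lim_{\alpha\geq\beta}R_{s_\beta}\circ R_{s_\alpha}(x)=R_{s_\beta}(x)$ for every $\beta$, whence $y=x$ by separation. Relabelling, we may thus assume $\Sigma=\omega_1$, that $\mathfrak{s}=(R_\alpha)_{\alpha<\omega_1}$ is a retractional skeleton with $D\subseteq D(\mathfrak{s})$ dense, and---since increasing sequences in the chain have their chain-suprema as genuine suprema in $\Sigma$---that $\mathfrak{s}$ remains $\mathcal{A}$-shrinking with respect to $D$. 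With the skeleton on a chain, I would then define $D'=\{R_\gamma(x):x\in D,\ \gamma<\omega_1\}\supseteq D$; this is dense, contained in $D(\mathfrak{s})$, and I claim it satisfies both (a) and (b). The verifications rest on the commutation relation $R_\delta\circ R_\gamma=R_{\min(\delta,\gamma)}$, a special case of property~(ii) of Definition~\ref{def: r-skeleton}. For (b), an up-directed $T\subseteq\omega_1$ is either cofinal in $\omega_1$, in which case $\lim_{\delta\in T}R_\delta(R_\gamma(x))=R_\gamma(x)\in D'$ by property~(iv), or bounded with $\varepsilon=\sup T<\omega_1$; since $\cf(\varepsilon)\leq\omega$, property~(iii) gives $\lim_{\delta\in T}R_\delta(R_\gamma(x))=R_\varepsilon\circ R_\gamma(x)=R_{\min(\varepsilon,\gamma)}(x)\in D'$. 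For (a), take $y=R_\gamma(x)\in D'$ and an increasing sequence $\alpha_n\to\alpha$: if $\alpha>\gamma$ then $R_{\alpha_n}(y)=R_\gamma(x)=R_\alpha(y)$ eventually and the $\mathcal{A}$-oscillation vanishes, while if $\alpha\leq\gamma$ then $R_{\alpha_n}(y)=R_{\alpha_n}(x)$ and $R_\alpha(y)=R_\alpha(x)$, so the required convergence is exactly the shrinkingness of $\mathfrak{s}$ at the point $x\in D$. Having produced data satisfying (a) and (b), \cite[Theorem~B]{CCS} yields that $K$ is semi-Eberlein.

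The main obstacle is the chain reduction, and within it the verification of condition (iv) for the subfamily indexed by the chain; the clean way around it is the separation-of-points argument above, which is where the hypothesis $w(K)=\omega_1$ is genuinely used, both to run the closing-off in $\omega_1$ steps and to keep all the suprema encountered inside $\Sigma$. Once the skeleton lives on a chain, the closure condition (b) becomes almost automatic, because every bounded up-directed subset of $\omega_1$ has countable cofinality, so the problematic large-cofinality limits that (b) must control in the general setting simply do not arise.
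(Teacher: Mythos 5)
Your implication (i)$\Rightarrow$(ii) matches the paper's. For (ii)$\Rightarrow$(i) you take a genuinely different route: the paper never re-enters \cite[Theorem~B]{CCS}, but instead builds an explicit embedding $h:K\to[-1,1]^T$ (coordinates $\tfrac{1}{2\lambda}(f-f\circ r_\alpha)$ for $f\in\mathcal A\cap M_{\alpha+1}$, rescaled by $1/n$) and verifies directly that the dense set $D$ lands in $c_0(T)$, using shrinkingness to rule out infinitely many large coordinates. You instead reduce the skeleton to an $\omega_1$-chain and then enlarge $D$ to $D'=\{R_\gamma(x):x\in D,\ \gamma<\omega_1\}$ so that the closure condition (b) of \cite[Theorem~B(2)]{CCS} holds, after which Theorem~B itself finishes the job. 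This is an attractive idea, and your verifications of (a) and (b) for $D'$ on a chain are correct: the case analysis on $\alpha\lessgtr\gamma$ via $R_\delta\circ R_\gamma=R_{\min(\delta,\gamma)}$ works, your derivation of condition (iv) for the subfamily from point-separation plus compactness (unique cluster point) is sound, and your observation that bounded up-directed subsets of $\omega_1$ have countable cofinality is exactly why (b) trivializes on a chain.

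The gap is in the chain reduction itself, which you dispatch as ``a closing-off argument of length $\omega_1$.'' Two things are swept under the rug. First, to cover $D$ by $\omega_1$ many metrizable fibers $R_{s_\alpha}[K]$ you need $|D|\le\omega_1$, which is not given; this is harmless (replace $D$ by a dense subset of cardinality $d(K)\le w(K)=\omega_1$, noting that (ii) passes to subsets), but should be said. Second, and more seriously, arranging that $\{R_{s_\alpha}:\alpha<\omega_1\}$ \emph{separates the points of} $K$ is not a routine closing-off: there are up to $2^{\omega_1}$ pairs of points, so one cannot close off over pairs, and closing off over a separating family $\{f_\xi:\xi<\omega_1\}\subset C(K)$ requires knowing that for each $f$ and $\varepsilon>0$ there is $s\in\Sigma$ with $\|f-f\circ R_s\|\le\varepsilon$ (so that each $f_\xi$ is ``caught'' by the chain). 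That fact is true but is precisely the non-trivial content hidden in the suitable-model machinery; the paper obtains the chain, its separation property (R2)/(R7)/(R10), and the covering $\bigcup_\alpha r_\alpha[K]=D(\mathfrak s)$ from \cite[Proposition~17]{CCS} and \cite[Lemma~3.2]{C} via Lemma~\ref{inverse-system-canonical-retractions}, not from a bare closing-off. If you invoke that lemma (or the canonical retractions of \cite[Definition~12]{CCS}) for the reduction, your argument closes; as written, the separation claim is the unproved crux.
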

\begin{proof}
The fact that (i) implies (ii) follows directly from \cite[Theorem~B]{CCS}. Now assume (ii) and fix $\lambda>1$ such that $\mathcal{A} \subset \lambda B_{C(K)}$. Let $(M_\alpha)_{\alpha \in \omega_1}$ be the family of sets and $(r_\alpha)_{\alpha \in \omega_1}$ be the family of projections given by Lemma \ref{inverse-system-canonical-retractions} \eqref{exists-canonical-retraction} and note that it follows from Skolem's Theorem (\cite[Theorem~4]{CCS}) that we may assume that $\mathcal{A} \in M_0$. 

For each $\alpha < \omega_1$, define $T_\alpha=(\mathcal{A} \cap M_\alpha) \times \{\alpha\}$ and set $T=T_0 \cup \bigcup_{\alpha < \omega_1} T_{\alpha+1}$. Let us show that the mapping $h: K \to [-1,1]^{T}$ defined as \[h(x)(t):=\begin{cases}
\tfrac{1}{2\lambda}(f(x) - f(r_\alpha(x))), & t=(f, \alpha+1),\: f\in \mathcal{A} \cap M_{\alpha+1} ,\\
\tfrac{1}{\lambda}f(r_0(x)), & t=(f,0),\: f\in \mathcal{A} \cap M_0,
\end{cases}\]
is a homeomorphic embedding such that $h\big[D(\mathfrak s)] \subset \Sigma(T)$ and that $(r_\alpha)_{\alpha<\omega_1}$ is a commutative retractional skeleton inducing the set $D(\mathfrak{s})$.
Clearly $h$ is continuous. Let us verify that it is one-to-one. Indeed, if $x, y$ are distinct points from $K$ then by (R7) there is a minimal ordinal $\alpha_0<\omega_1$ for which $r_{\alpha_0}(x)\neq r_{\alpha_0}(y)$ and $\alpha_0=0$ or it is a successor ordinal.
If $\alpha_0=0$, then by (R10) there exists $f \in M_0\cap \mathcal{A}$ such that $f\big(r_0(x)\big) \ne f\big(r_0(y)\big)$ and so we have $h(x)(f,0) \ne h(y)(f,0)$. Otherwise, $\alpha_0=\beta_0+1$ for some $\beta_0<\omega_1$, then by (R10) there exists $f\in \mathcal{A}\cap M_{\beta_0 +1}$ such that $f(x)=f(r_{\beta_0 +1}(x))\neq f(r_{\beta_0 +1}(y))=f(y)$. Therefore, since $r_{\beta_0}(x)=r_{\beta_0}(y)$, we obtain that $h(x)(f,\beta_0 +1) \neq h(y)(f,\beta_0 +1)$. Note that it follows from (R1), (R2), (R3) and (R5) that $(r_\alpha)_{\alpha<\omega_1}$ is a retractional skeleton on $K$, since (Rb) ensures that $M_\alpha$ is countable, for every $\alpha < \omega_1$ (where on $\omega_1$ we consider the ordering $\alpha\leq_* \beta$ if and only if $M_\alpha\subset M_\beta$). Moreover by \cite[Theorem~15(ii)(c)]{CCS}  and \cite[Lemma 3.2]{C} we have that $\bigcup_{\alpha<\omega_1} r_\alpha[K] = D(\mathfrak s)$. Now let us show that $h[D(\mathfrak{s})] \subset \Sigma(T)$.  
If $x\in D(\mathfrak{s})$, then there exists a minimal $\alpha_0<\omega_1$ such that $r_{\alpha_0}(x)=x$. We claim that
\[
\suppt \big(h(x)\big)=\{t\in T \colon h(x)(t)\neq 0\}\subset T_0 \cup \bigcup_{\alpha<\alpha_0+1} T_{\alpha+1}.
\]
Indeed, if $t \in T$ with $t \notin T_0 \cup \bigcup_{\alpha<\alpha_0+1} T_{\alpha+1}$, then there exist $\alpha>\alpha_0 $ and $f\in M_{\alpha +1}\cap \mathcal{A}$ such that $t=(f,\alpha +1)$. Thus,
\[h(x)(t)=\tfrac{1}{2\lambda}\big(f(x)-f\big(r_\alpha(x)\big)\big)=0,\]
since $\alpha>\alpha_0$ implies that $r_\alpha(x)=x$. This proves the claim.
Finally the fact that $h[D(\mathfrak s)]\subset \Sigma(T)$, follows by observing that $T_0$ is countable as well as $T_{\alpha +1}$, for every $\alpha< \omega_1$.

Now consider the mapping $\Phi:K\to [-1,1]^{T}$ given by
\[
\Phi(x)(f_n^{\alpha+1},\alpha+1):=\frac{1}{n}h(x)(f_n^{\alpha+1}, \alpha+1) \quad \text{and} \quad \Phi(x)(f_n^{0},0):=\frac{1}{n}h(x)(f_n^{0},0),
\]
where $\mathcal{A} \cap M_0=(f_{n}^{0})_{n<|\mathcal{A}\cap M_0|}$ and $\mathcal{A}\cap M_{\alpha+1}=(f_{n}^{\alpha+1})_{n<|\mathcal{A}\cap M_{\alpha+1}|}$, for every $\alpha < \omega_1$. It is clear that $\Phi$ is a homeomorphic embedding and that $\Phi[D(\mathfrak{s})]\subset \Sigma(T)$. In order to conclude that $K$ is semi-Eberlein, it remains to show that $\Phi[D]\subset c_0(T)$. Since $\mathfrak s$ is $\mathcal{A}$-shrinking with respect to $D$, using that the mapping $\omega_1\ni \alpha\mapsto \sup (M_\alpha\cap\Sigma)\in\Sigma$ is increasing and \cite[Theorem~15(ii)(c)]{CCS}, we obtain that $(r_\alpha)_{\alpha <\omega_1}$ is $\mathcal{A}$-shrinking with respect to $D$. Fix $x\in D$ and note that to prove that $\Phi(x) \in c_0(T)$, it is enough to show that, for every $\varepsilon>0$, the following set is finite
\[\Lambda:= \{\alpha <\omega_1:|h(x)(t)|>\varepsilon \text{ for some }t\in T_{\alpha+1}\}.\]
Suppose by contradiction that $\Lambda$ is infinite. Take a strictly increasing sequence $(\alpha_k)_{k\in\omega}$ from $\Lambda$ and functions $f_k\in \mathcal{A}\cap M_{\alpha_k+1}$ such that  $|h(x)(f_k,\alpha_k +1)|>\varepsilon$. Let $\alpha=\sup_{k\in\omega}\alpha_k$ and fix $k \in \omega$. Then
\begin{equation*}
    \begin{split}
        \rho_{\mathcal{A}}(r_{\alpha_k}(x),r_{\alpha}(x))&=\sup_{f\in\mathcal{A}}|\langle r_{\alpha_k}(x) - r_{\alpha}(x),f\rangle|  \geq |\langle r_{\alpha_k}(x) - r_{\alpha}(x),f_k\rangle|\\ &=|f_k(r_{\alpha}(x))-f_k(r_{\alpha_k}(x))|.
    \end{split}
\end{equation*}
Observing that $M_{\alpha_k+1}\subset M_{\alpha}$, (R10) ensures that $f_k(r_{\alpha}(x))=f_k(x)$. 
Therefore we obtain
\[\rho_{\mathcal{A}}(r_{\alpha_k}(x),r_{\alpha}(x))\geq |f_k(x)-f_k(r_{\alpha_k}(x))|>2\lambda \varepsilon.\]
But this contradicts the fact that the skeleton $(r_\alpha)_{\alpha<\omega_1}$ is $\mathcal{A}$-shrinking with respect to $D$.
\end{proof}

Proposition \ref{semiEberleinCharcterizatio-w1} allows us to obtain in Theorem \ref{semiEberlein-w1-Shrinking-InverseLimit} an interesting characterization of small semi-Eberlein spaces in terms of inverse limits. In order to do so, we introduce Definition \ref{Shrininking-InverseLimit} that is the translation of shrinkingness to the context of inverse limits. First let us recall the notion of right inverse of an inverse system. Given an inverse system $\mathbb S=\big((K_s)_{s \in \Sigma}, (p_{s}^{t}) _{s\le t}\big)$, we say that a family of continuous maps $\mathbb I=\{i_{s}^{t}: K_s \to K_t, s \le t\}$ is a \emph{right inverse} of $\mathbb{S}$ if $i_{s}^{t}$ is a right inverse of $p_{s}^{t}$, for every $s \le t$ and $i_{s}^{r}=i_{t}^{r} \circ i_{s}^{t}$, for every $s \le t \le r$. In this case \cite[Lemma~3.1]{KM} ensures that if $\big(K, (p_s)_{s \in \Sigma}\big)$ is the inverse limit of $\mathbb{S}$, then there exists a unique family of continuous maps $\{i_s: K_s \to K, s \in \Sigma\}$ such that $i_s$ is a right inverse of $p_s$, for every $s \in \Sigma$ and $i_s=i_t \circ i_{s}^{t}$, for every $s \le t$. We say that $\{i_s: s \in \Sigma\}$ is the \emph{right inverse of $\big(K, (p_{s})_{s \in \Sigma}\big)$ with respect to $\mathbb I$}.

\begin{definition}\label{Shrininking-InverseLimit}
Let $\mathbb S=\big((K_s)_{s \in \Sigma}, (p_{s}^{t})_{s\le t}\big)$ be a $\sigma$-complete inverse system, $\mathbb I$ be a right inverse of $\mathbb S$, $\big(K, (p_{s})_{s \in \Sigma}\big)$ be the inverse limit of $\mathbb S$, $\mathcal A$ be a bounded subset of $C(K)$ and $D$ be a subset of $K$. We say that $\big(K, (p_{s})_{s \in \Sigma}\big)$ is \emph{($\mathcal{A}, \mathbb I)$-shrinking with respect to $D$} if for every increasing sequence $(s_n)_{n \in \omega}$ in $\Sigma$ with $s=\sup_{n \in \omega}s_n$ and every $x \in D$ it holds that 
$\lim_{n \to \infty} \sup_{f \in \mathcal{A}} \vert f \circ i_{s_n} \circ p_{s_n}(x)-f \circ i_s \circ p_s(x)\vert=0$,
where $\{i_s: s \in \Sigma\}$ is the right inverse of $\big(K, (p_{s})_{s \in \Sigma}\big)$ with respect to $\mathbb I$.
\end{definition}

\begin{theorem}\label{semiEberlein-w1-Shrinking-InverseLimit}
Let $K$ be a small compact space. Then the following conditions are equivalent:
\begin{enumerate}
    \item  $K$ is semi-Eberlein.
    \item There exist a bounded and separating subset $\mathcal A$ of $C(K)$, a dense subset $D$ of $K$, a continuous inverse system of compact metric spaces $\mathbb{S}=\big((K_\alpha)_{\alpha \in \omega_1}, (r_{\alpha}^{\beta})_{\alpha \le \beta}\big)$ with a right-inverse $\mathbb I=\{i_{\alpha}^{\beta}: \alpha \le \beta\}$ and a family of retractions $\{r_\alpha: K \to K_\alpha, \alpha \in \omega_1\}$ such that $\big(K, (r_\alpha)_{\alpha \in \omega_1}\big)$ is the inverse limit of $\mathbb{S}$, it is $(\mathcal A, \mathbb I)$-shrinking with respect to $D$ and $D \subset \bigcup_{\alpha \in \omega_1} i_\alpha[K_\alpha]$, where $\{i_\alpha: \alpha \in \omega_1\}$ is the right inverse of $\big(K, (r_\alpha)_{\alpha \in \omega_1}\big)$ with respect to $\mathbb I$.
\end{enumerate}
\end{theorem}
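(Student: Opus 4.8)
The plan is to set up a dictionary between the two characterizations of small semi-Eberlein compacta: the one in Proposition \ref{semiEberleinCharcterizatio-w1} (a retractional skeleton that is $\mathcal{A}$-shrinking with respect to $D$) and the inverse-limit data in (ii). The bridge is provided by Lemma \ref{inverse-system-canonical-retractions}: a family of internal retractions on $K$ is, simultaneously, a retractional skeleton and the family of projections of a continuous inverse system, and passing between the two descriptions is governed by the right inverses, which in the canonical situation are nothing but inclusions.

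For (i) $\Rightarrow$ (ii), I would first invoke Proposition \ref{semiEberleinCharcterizatio-w1} to produce a bounded separating $\mathcal{A} \subset C(K)$, a dense set $D$, and the retractional skeleton $\mathfrak{s} = (r_\alpha)_{\alpha \in \omega_1}$ of canonical retractions built in its proof, which is $\mathcal{A}$-shrinking with respect to $D$ and induces the dense set $\bigcup_\alpha r_\alpha[K] \supset D$. Lemma \ref{inverse-system-canonical-retractions}\eqref{new-part} turns $(r_\alpha)$ into a continuous inverse system $\mathbb{S} = \big((K_\alpha)_{\alpha \in \omega_1}, (r_\alpha^\beta)\big)$ with $K_\alpha = r_\alpha[K]$ metrizable (as $w(K_\alpha) \le \max(\omega, |\alpha|) = \omega$) and $\big(K, (r_\alpha)\big)$ its inverse limit. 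Since the skeleton relations give $K_\alpha = r_\alpha[K] \subset r_\beta[K] = K_\beta$ for $\alpha \le \beta$, the inclusions $i_\alpha^\beta : K_\alpha \hookrightarrow K_\beta$ are right inverses of the bonding maps $r_\alpha^\beta = r_\alpha|_{K_\beta}$ and are mutually compatible, so $\mathbb{I} = \{i_\alpha^\beta\}$ is a right inverse of $\mathbb{S}$; the associated $i_\alpha : K_\alpha \hookrightarrow K$ is again the inclusion. Consequently $i_\alpha \circ r_\alpha = r_\alpha$ as a self-map of $K$, so the quantity $\sup_{f \in \mathcal{A}} |f \circ i_{\alpha_n} \circ r_{\alpha_n}(x) - f \circ i_\alpha \circ r_\alpha(x)|$ from Definition \ref{Shrininking-InverseLimit} coincides with $\sup_{f \in \mathcal{A}}|f(r_{\alpha_n}(x)) - f(r_\alpha(x))|$; hence the $\mathcal{A}$-shrinkingness of $\mathfrak{s}$ is literally the $(\mathcal{A}, \mathbb{I})$-shrinkingness required in (ii), and $D \subset \bigcup_\alpha r_\alpha[K] = \bigcup_\alpha i_\alpha[K_\alpha]$.

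For (ii) $\Rightarrow$ (i), the idea is to manufacture an internal retractional skeleton on $K$ from the abstract inverse-system data by composing each projection with the corresponding right inverse: set $R_\alpha := i_\alpha \circ r_\alpha : K \to K$. Using the identities $r_\alpha \circ i_\alpha = \mathrm{id}_{K_\alpha}$, $i_\alpha = i_\beta \circ i_\alpha^\beta$ and the cone relations $r_\alpha = r_\alpha^\beta \circ r_\beta$, one checks that each $R_\alpha$ is idempotent, hence a retraction, and that $R_\alpha = R_\beta \circ R_\alpha = R_\alpha \circ R_\beta$ for $\alpha \le \beta$, i.e.\ conditions (i)--(ii) of Definition \ref{def: r-skeleton}; metrizability of $R_\alpha[K] = i_\alpha[K_\alpha] \cong K_\alpha$ is immediate since $i_\alpha$ is injective, and condition (iv) follows because $r_\gamma(R_\alpha(x)) = r_\gamma(x)$ once $\gamma \le \alpha$, so $R_\alpha(x) \to x$ coordinatewise in $K = \varprojlim K_\gamma$. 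Here the family $\{r_\alpha\}$ separates points (being the projections of an inverse limit), $\omega_1$ is $\sigma$-complete, and the induced set is $\bigcup_\alpha R_\alpha[K] = \bigcup_\alpha i_\alpha[K_\alpha] \supset D$. Exactly as before, $R_\alpha = i_\alpha \circ r_\alpha$ converts the $(\mathcal{A}, \mathbb{I})$-shrinkingness hypothesis into the $\mathcal{A}$-shrinkingness of $(R_\alpha)$ with respect to $D$, so Proposition \ref{semiEberleinCharcterizatio-w1} yields that $K$ is semi-Eberlein.

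The step I expect to be the main obstacle is verifying the $\sigma$-continuity condition (iii) of Definition \ref{def: r-skeleton} for $(R_\alpha)$ in the direction (ii) $\Rightarrow$ (i), since this is the only place where the \emph{continuity} of the inverse system (rather than mere functoriality) is genuinely used. Concretely, for an increasing sequence $\alpha_n \uparrow \alpha$ one must show $R_{\alpha_n}(x) \to R_\alpha(x)$, and testing against the coordinates $\gamma \ge \alpha$ reduces this to the convergence $i_{\alpha_n}^\alpha(r_{\alpha_n}(x)) \to r_\alpha(x)$ in $K_\alpha$. This is where $\sigma$-completeness enters: at the limit $\alpha$ the space $K_\alpha$ is the inverse limit of $(K_{\alpha_n})$, and in that representation both $i_{\alpha_n}^\alpha(r_{\alpha_n}(x))$ and $r_\alpha(x)$ share the coordinates $(r_{\alpha_m}(x))_{m \le n}$, forcing convergence. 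A secondary point requiring care in (i) $\Rightarrow$ (ii) is matching the ordinal indexing of the inverse system from Lemma \ref{inverse-system-canonical-retractions} with the order under which the canonical retractions form an $\mathcal{A}$-shrinking skeleton, but this is handled by the monotonicity of $\alpha \mapsto \sup(M_\alpha \cap \Sigma)$ already exploited in the proof of Proposition \ref{semiEberleinCharcterizatio-w1}.
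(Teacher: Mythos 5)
Your proposal is correct and follows essentially the same route as the paper: both directions hinge on Proposition \ref{semiEberleinCharcterizatio-w1} together with the canonical retractions of Lemma \ref{inverse-system-canonical-retractions}, with the right inverses realized as inclusions in one direction and the skeleton recovered as $R_\alpha = i_\alpha \circ r_\alpha$ in the other. The only cosmetic difference is that you verify the skeleton axioms (in particular condition (iii) at limit stages) by a direct coordinatewise argument where the paper cites \cite[Lemma~3.3]{KM}.
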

\begin{proof}
Assume that $K$ is semi-Eberlein. Let $\mathfrak s=(R_s)_{s \in \Sigma}$ be the retractional skeleton on $K$, $D \subset D(\mathfrak{s})$ be the dense subset of $K$ and $\mathcal{A}$ be the bounded and separating subset of $C(K)$ given by Proposition \ref{semiEberleinCharcterizatio-w1} and let $(M_\alpha)_{\alpha \in \omega_1}$, $(r_\alpha)_{\alpha \in \omega_1}$ and $\mathbb{S}=\big((K_\alpha)_{\alpha \in \omega_1}, (r_{\alpha}^{\beta}) _{\alpha \le \beta}\big)$ be given by Lemma \ref{inverse-system-canonical-retractions}. It is clear that each $K_\alpha$ is metrizable and that $\mathbb I=\{i_{\alpha}^{\beta}: \alpha \le \beta\}$ is a right inverse of $\mathbb S$, where $i_{\alpha}^{\beta}$ is the inclusion of $K_\alpha$ into $K_\beta$, for every $\alpha \le \beta$. Moreover, for each $\alpha \in \omega_1$, if we define $i_\alpha$ as the inclusion of $K_\alpha$ into $K$, then $\{i_\alpha: \alpha \in \omega_1\}$ is the right inverse of $\big(K, (r_\alpha)_{\alpha \in \omega_1}\big)$ with respect to $\mathbb I$. Note that it follows from (R1), (R2), (R3) and (R5) that $(r_\alpha)_{\alpha\in\omega_1}$ is a retractional skeleton on $K$. Therefore, using \cite[Theorem~15(ii)(c)]{CCS} and \cite[Lemma~3.2]{C}, we conclude that $\bigcup_{\alpha \in \omega_1}r_\alpha[K]=D(\mathfrak{s})$ and thus $D \subset \bigcup_{\alpha \in \omega_1} r_\alpha[K]$. Finally, the fact that $\big(K, (r_\alpha)_{\alpha \in \omega_1}\big)$ is $(\mathcal A, \mathbb I)$-shrinking with respect to $D$ follows from the $\mathcal{A}$-shrinkingness of $\mathfrak{s}$ with respect to $D$ and \cite[Theorem~15(ii)(c)]{CCS}, having in mind that $(M_\alpha)_{\alpha \in \omega_1}$ is an increasing and continuous family of countable sets. Now assume that (2) holds and for each $\alpha \in \omega_1$, define $q_\alpha: K \to K$ as $q_\alpha=i_\alpha \circ r_\alpha$. 
We claim that $\mathfrak{s}=(q_\alpha)_{\alpha \in \omega_1}$ is a retractional skeleton on $K$. Indeed, conditions (i) and (ii) of Definition \ref{def: r-skeleton} are clearly satisfied. Note that \cite[Lemma~3.3~(2)]{KM} ensures that condition (iv) holds. Finally, condition (iii) follows from the continuity of $\mathbb{S}$ and \cite[Lemma~3.3~(2)]{KM}. Since $D \subset \bigcup_{\alpha \in \omega_1} i_\alpha[K_\alpha]$, we have that $D \subset D(\mathfrak{s})$ and it is easy to see that $(q_\alpha)_{\alpha \in \omega_1}$ is $\mathcal A$-shrinking with respect to $D$. Therefore Propostion \ref{semiEberleinCharcterizatio-w1} ensures that $K$ is semi-Eberlein.
\end{proof}

\section{A special class of semi-Eberlein compacta}\label{sec:RS}

This section is dedicated to the introduction and study of the class $\mathcal{RS}$. Following \cite{KL}, we say that a function $f:X \to Y$ between topological spaces is \emph{semi-open} if $f[U]$ has nonempty interior, for every nonempty open subset $U$ of $X$.

\begin{definition}
We say that a compact space $K$ belongs to $\mathcal{RS}$ if $K$ is the inverse limit of a continuous inverse system of compact metric spaces $\mathbb{S}=\big((K_\alpha)_{\alpha \in \omega_1}, (p_{\alpha}^{\beta}) _{\alpha \le \beta}\big)$ such that $p_{\alpha}^{\beta}$ is a semi-open retraction, for every $\alpha \le \beta$.
\end{definition}

In Corollary \ref{weaker-definition} we show that in the definition of $\mathcal{RS}$ it is enough to require that $p_{\alpha}^{\alpha+1}$ is a semi-open retraction, for every $\alpha \in \omega_1$.

\begin{lemma}\label{quotient-semiopen}
Let $X$, $Y$ and $Z$ be topological spaces, $q:X \to Y$ be a continuous and onto map and $h:X \to Z$ be a semi-open map. If there exists $\bar{h}:Y \to Z$ such that $\bar{h} \circ q=h$, then $\bar{h}$ is semi-open.
\end{lemma}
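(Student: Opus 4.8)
The plan is to show directly that $\bar h$ satisfies the defining property of a semi-open map, namely that $\bar h[V]$ has nonempty interior for every nonempty open subset $V$ of $Y$. The natural strategy is to transport the openness information from $X$ to $Y$ using the surjectivity and continuity of $q$, and then invoke the semi-openness of $h=\bar h\circ q$. So let $V\subseteq Y$ be nonempty and open, and consider its preimage $U:=q^{-1}[V]\subseteq X$. Since $q$ is continuous, $U$ is open, and since $q$ is onto and $V$ is nonempty, $U$ is nonempty as well.

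The next step is to relate $h[U]$ to $\bar h[V]$. Because $U=q^{-1}[V]$ and $q$ is surjective, we have $q[U]=V$; hence
\[
h[U]=(\bar h\circ q)[U]=\bar h\big[q[U]\big]=\bar h[V].
\]
Now the semi-openness of $h$ applied to the nonempty open set $U$ tells us that $h[U]$ has nonempty interior in $Z$, and by the displayed equality this is exactly $\bar h[V]$. Since $V$ was an arbitrary nonempty open subset of $Y$, this shows $\bar h$ is semi-open, which is the desired conclusion.

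I do not expect any serious obstacle here: the argument is a short diagram-chase resting on two elementary set-theoretic facts, that $q[q^{-1}[V]]=V$ when $q$ is onto, and that continuity makes $q^{-1}[V]$ open. The only point requiring a moment's care is verifying $q[U]=V$ rather than merely $q[U]\subseteq V$; this is precisely where surjectivity of $q$ is used, and it is what guarantees that the image $\bar h[V]$ coincides with $h[U]$ rather than merely containing it. It is also worth noting that $\bar h$ is automatically well-defined and that no continuity of $\bar h$ is needed for the semi-openness conclusion, so the hypotheses are used exactly as stated.
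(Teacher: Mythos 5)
Your argument is correct and is essentially identical to the paper's proof: pull back the open set along $q$ (using continuity and surjectivity to get a nonempty open preimage), note that $h[q^{-1}[V]]=\bar h[V]$ since $q[q^{-1}[V]]=V$, and apply semi-openness of $h$. No issues.
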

\begin{proof}
Let $U$ be a nonempty open subset of $Y$. Since $q$ is continuous and onto, we have that $q^{-1}[U]$ is a nonempty and open subset of $X$. Thus 
$h\big[q^{-1}[U]\big]=\bar{h}[U]$ has nonempty interior, since $h$ is semi-open.
\end{proof}

\begin{lemma}\label{ps-semiopen-equiv-pst-semiopen}
Let $\mathbb{S}=\big((K_s)_{s \in \Sigma}, (p_{s}^{t})_{s \le t}\big)$ be an inverse system of compact spaces and $\big(K, (p_s)_{s \in \Sigma}\big)$ be the inverse limit of $\mathbb{S}$. Then the following conditions are equivalent:
\begin{enumerate}
    \item[(a)] $p_s$ is semi-open, for every $s \in \Sigma$.
    \item[(b)] $p_{s}^{t}$ is semi-open, for every $s \le t$.
\end{enumerate}
\end{lemma}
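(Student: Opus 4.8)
The plan is to prove the two implications separately, exploiting the relation $p_s = p_s^t \circ p_t$ for $s \le t$, the fact that every projection $p_t$ is continuous and onto (as recalled above, since we assume the bonding maps are onto), and the factorization Lemma \ref{quotient-semiopen}.

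For the implication (a) $\Rightarrow$ (b), I would simply invoke Lemma \ref{quotient-semiopen}. Fix $s \le t$ and apply the lemma with $X = K$, $Y = K_t$, $Z = K_s$, the continuous surjection $q = p_t$, the semi-open map $h = p_s$, and $\bar{h} = p_s^t$; since $p_s^t \circ p_t = p_s$, the lemma immediately yields that $p_s^t$ is semi-open. This direction is essentially a direct citation.

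The content lies in (b) $\Rightarrow$ (a), which I expect to be the main obstacle, and its heart is reducing to a convenient base of the inverse limit. Recall that $K$ carries the subspace topology from $\prod_{s \in \Sigma} K_s$, so the finite intersections $\bigcap_{i=1}^n p_{s_i}^{-1}[V_i]$, with $V_i$ open in $K_{s_i}$, form a base. First I would show that, because $\Sigma$ is up-directed, the simpler family $\{p_t^{-1}[W] : t \in \Sigma,\ W \text{ open in } K_t\}$ is already a base: given such a finite intersection and a point in it, choose $t \in \Sigma$ with $t \ge s_i$ for all $i$, and using $p_{s_i} = p_{s_i}^t \circ p_t$ rewrite $\bigcap_i p_{s_i}^{-1}[V_i] = p_t^{-1}[W]$ with $W = \bigcap_i (p_{s_i}^t)^{-1}[V_i]$ open in $K_t$.

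With this base in hand, fix $s \in \Sigma$ and a nonempty open $U \subseteq K$; to prove $p_s[U]$ has nonempty interior it suffices to locate a nonempty basic set inside $U$. Picking a point $x \in U$, the up-directedness of $\Sigma$ lets me choose the index $t$ above with the \emph{additional} requirement $t \ge s$, producing a nonempty $W$ open in $K_t$ with $x \in p_t^{-1}[W] \subseteq U$. Since $p_t$ is onto, $p_t\big[p_t^{-1}[W]\big] = W$, and hence $p_s\big[p_t^{-1}[W]\big] = p_s^t\big[p_t[p_t^{-1}[W]]\big] = p_s^t[W]$. As $W$ is nonempty and open and $p_s^t$ is semi-open by hypothesis, $p_s^t[W]$ has nonempty interior; therefore so does $p_s[U] \supseteq p_s^t[W]$. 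This shows $p_s$ is semi-open and completes the equivalence. The only delicate point is the bookkeeping needed to arrange $t \ge s$ simultaneously with $t \ge s_i$, which is precisely where the up-directedness of $\Sigma$ enters.
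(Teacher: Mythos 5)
Your proof is correct and follows essentially the same route as the paper: the implication (a)$\Rightarrow$(b) is the identical application of Lemma \ref{quotient-semiopen}, and (b)$\Rightarrow$(a) reduces to the base $\{p_t^{-1}[W]\}$ over a cofinal set of indices $t\ge s$ and then uses semi-openness of $p_s^t$ via $p_s\big[p_t^{-1}[W]\big]\supseteq p_s^t[W]$. The only cosmetic difference is that you verify the base fact by hand using up-directedness, whereas the paper cites \cite[Proposition~2.5.5]{E}.
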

\begin{proof}
To see that (a) implies (b), fix $s \le t$ and apply Lemma \ref{quotient-semiopen} to $q=p_t$, $h=p_s$ and $\bar{h}=p_{s}^{t}$. Now assume (b) and fix $s_0 \in \Sigma$. Since $\{s \in \Sigma: s_0 \le s\}$ is a cofinal subset of $\Sigma$, \cite[Proposition~2.5.5]{E} ensures that $\bigcup_{s\ge s_0} \{p_s^{-1}[W]: W \ \text{is open in} \ K_s\}$ is an open basis of $K$ and therefore to conclude that $p_{s_0}$ is semi-open, it is enough to show that for every $s \ge s_0$ and every nonempty open subset $W$ of $K_s$ the set $p_{s_0}\big[p_s^{-1}[W]\big]$ has nonempty interior. This follows from the fact that $p_{s_0}^{s}$ is semi-open and $p_{s_0}^{s}[W] \subset p_{s_0}\big[p_s^{-1}[W]\big]$.
\end{proof}

\begin{proposition}\label{consective-semiopen-all-semiopen}
Let $\kappa$ be a cardinal and $\mathbb{S}=\big((K_\alpha)_{\alpha \in \kappa}, (p_{\alpha}^{\beta})_{\alpha \le \beta} \big)$ be a continuous inverse system of compact spaces. If $p_{\alpha}^{\alpha+1}$ is semi-open, for every $\alpha \in \kappa$, then $p_{\alpha}^{\beta}$ is semi-open, for every $\alpha \le \beta$.
\end{proposition}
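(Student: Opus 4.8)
The plan is to argue by transfinite induction on $\beta$, proving for every $\beta$ in the index set the statement $P(\beta)$: the map $p_\alpha^\beta$ is semi-open for every $\alpha \le \beta$. The base case $\beta=0$ is immediate, since $p_0^0$ is the identity and hence semi-open. Before entering the induction I would record one elementary fact that is used in the successor step: a composition of semi-open maps is semi-open. Indeed, if $f\colon X\to Y$ and $g\colon Y\to Z$ are semi-open and $U\subseteq X$ is nonempty open, then $V:=\operatorname{int}(f[U])$ is nonempty open, so $g[V]$ has nonempty interior, and since $g[V]\subseteq g[f[U]]=(g\circ f)[U]$, the image $(g\circ f)[U]$ has nonempty interior.

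For the successor step I would assume $P(\beta)$ and deduce $P(\beta+1)$. The case $\alpha=\beta+1$ is the identity, so fix $\alpha\le\beta$. Condition (ii) of Definition \ref{def:inverse-system} gives the factorization $p_\alpha^{\beta+1}=p_\alpha^\beta\circ p_\beta^{\beta+1}$, in which $p_\beta^{\beta+1}$ is semi-open by hypothesis and $p_\alpha^\beta$ is semi-open by $P(\beta)$; the composition fact then yields that $p_\alpha^{\beta+1}$ is semi-open. For the limit step, let $\lambda$ be a limit ordinal and assume $P(\beta)$ for every $\beta<\lambda$. The case $\alpha=\lambda$ is the identity, so fix $\alpha<\lambda$ and set $T=\{\gamma:\gamma<\lambda\}$, an up-directed subset of the index set with $\sup T=\lambda$. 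Since $\mathbb{S}$ is continuous, $\big(K_\lambda,(p_\gamma^\lambda)_{\gamma\in T}\big)$ is the inverse limit of $\mathbb{S}|_T$. Every bonding map of $\mathbb{S}|_T$ has the form $p_\gamma^{\gamma'}$ with $\gamma\le\gamma'<\lambda$, hence is semi-open by $P(\gamma')$. Applying Lemma \ref{ps-semiopen-equiv-pst-semiopen} to $\mathbb{S}|_T$ and its inverse limit, I conclude that every projection $p_\gamma^\lambda$ with $\gamma\in T$ is semi-open; in particular $p_\alpha^\lambda$ is semi-open, which establishes $P(\lambda)$.

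I expect the limit stage to be the only delicate point. Its resolution is precisely the passage from the information that all bonding maps strictly below $\lambda$ are semi-open to the conclusion that the projections onto $K_\lambda$ are semi-open, and this passage is exactly what the continuity of the system together with Lemma \ref{ps-semiopen-equiv-pst-semiopen} supplies. By contrast, the successor step and the auxiliary composition fact are entirely routine, and no appeal to Lemma \ref{quotient-semiopen} beyond its use inside Lemma \ref{ps-semiopen-equiv-pst-semiopen} is needed.
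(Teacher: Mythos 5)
Your proof is correct and follows essentially the same route as the paper: transfinite induction in which the successor step uses the factorization $p_\alpha^{\beta+1}=p_\alpha^\beta\circ p_\beta^{\beta+1}$ together with closure of semi-open maps under composition, and the limit step combines the continuity of $\mathbb{S}$ with Lemma \ref{ps-semiopen-equiv-pst-semiopen}. The only difference is cosmetic (you make the base case and the composition fact explicit).
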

\begin{proof}
Let us prove by induction on $\alpha \in \kappa$ that $p_{\gamma_1}^{ \gamma_2}$ is semi-open, for every $\gamma_1<\gamma_2 \le \alpha$.
Suppose that the result holds for $\alpha$ and let $\gamma_1<\gamma_2 \le \alpha+1$. If $\gamma_2 \le \alpha$, then the result follows from the induction hypothesis. Otherwise, we have that $\gamma_2=\alpha+1$ and thus $p_{\gamma_1}^{\gamma_2}$ is semi-open, since $p_{\gamma_1}^{\alpha+1} =p_{\gamma_1}^{\alpha}\circ p_{\alpha}^{\alpha+1}$ and $p_{\gamma_1}^{ \alpha}$ and $p_{\alpha}^{\alpha+1}$ are semi-open. 
Now fix a limit ordinal $\alpha \in \kappa$ and assume that the result holds for every ordinal strictly smaller than $\alpha$. Note that the induction hypothesis ensures that $p_{\gamma_1}^{\gamma_2}$ is semi-open, for every $\gamma_1<\gamma_2< \alpha$. Moreover, it follows from the continuity of $\mathbb{S}$ that $\big(K_\alpha, (p_{\gamma}^{\alpha}) _{\gamma \in \alpha}\big)$ is the inverse limit of $\mathbb{S}|_{\alpha}$ and therefore Lemma \ref{ps-semiopen-equiv-pst-semiopen} ensures that $p_{\gamma}^{\alpha}$ is semi-open, for every $\gamma \in \alpha$.
\end{proof}

\begin{corollary}\label{weaker-definition}
If $\mathbb{S}=\big((K_\alpha)_{\alpha \in \omega_1}, (p_{\alpha}^{\beta}) _{\alpha \le \beta}\big)$ is a continuous inverse system of compact metric spaces such that $p_{\alpha}^{\alpha+1}$ is a semi-open retraction, for every $\alpha \in \omega_1$, then the inverse limit of $\mathbb{S}$ belongs to $\mathcal{RS}$.
\end{corollary}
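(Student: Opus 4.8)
The plan is to show that the given inverse system $\mathbb{S}$ already witnesses membership in $\mathcal{RS}$; that is, to upgrade the hypotheses from the consecutive maps to \emph{all} bonding maps, proving that every $p_\alpha^\beta$ is a semi-open retraction. The two attributes can be treated independently. Semi-openness costs nothing: since each $p_\alpha^{\alpha+1}$ is semi-open and $\mathbb{S}$ is continuous, Proposition \ref{consective-semiopen-all-semiopen} immediately gives that $p_\alpha^\beta$ is semi-open for all $\alpha \le \beta$. Hence the real content is to promote the retraction property from consecutive steps to arbitrary pairs.

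For the retraction part, I would fix $\alpha \in \omega_1$ and build, by transfinite recursion on $\beta \ge \alpha$, continuous maps $i_\alpha^\beta : K_\alpha \to K_\beta$, maintaining the single invariant that $p_{\gamma_1}^{\gamma_2} \circ i_\alpha^{\gamma_2} = i_\alpha^{\gamma_1}$ whenever $\alpha \le \gamma_1 \le \gamma_2 \le \beta$. This invariant already encodes everything I need: specialising to $\gamma_1 = \alpha$ and $\gamma_2 = \beta$ yields $p_\alpha^\beta \circ i_\alpha^\beta = i_\alpha^\alpha = \mathrm{id}_{K_\alpha}$, so each $i_\alpha^\beta$ is a right inverse of $p_\alpha^\beta$. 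I start with $i_\alpha^\alpha = \mathrm{id}_{K_\alpha}$. At a successor $\beta = \delta+1$, I choose a right inverse $j_\delta$ of the retraction $p_\delta^{\delta+1}$ and set $i_\alpha^{\delta+1} = j_\delta \circ i_\alpha^\delta$; the invariant is preserved by the short computation using $p_\gamma^{\delta+1} = p_\gamma^\delta \circ p_\delta^{\delta+1}$ together with $p_\delta^{\delta+1} \circ j_\delta = \mathrm{id}$.

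The one genuinely delicate stage is the limit ordinal, and this is precisely where the continuity of $\mathbb{S}$ is indispensable. At a limit $\lambda$, the invariant says that $\{i_\alpha^\gamma : \alpha \le \gamma < \lambda\}$ forms a cone over $\mathbb{S}|_{[\alpha,\lambda)}$. Because $[\alpha,\lambda)$ is cofinal in $\lambda$ and $\mathbb{S}$ is continuous, $\big(K_\lambda,(p_\gamma^\lambda)_{\gamma\in[\alpha,\lambda)}\big)$ is the inverse limit of $\mathbb{S}|_{[\alpha,\lambda)}$, so its universal property delivers a unique continuous $i_\alpha^\lambda : K_\alpha \to K_\lambda$ with $p_\gamma^\lambda \circ i_\alpha^\lambda = i_\alpha^\gamma$ for all $\gamma \in [\alpha,\lambda)$. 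This extends the invariant across $\lambda$ and closes the recursion.

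It then remains to observe that each $i_\alpha^\beta$, being a continuous injection from the compact space $K_\alpha$ into the Hausdorff space $K_\beta$, is a homeomorphic embedding, so $p_\alpha^\beta$ is a genuine retraction onto $i_\alpha^\beta[K_\alpha]$. Together with the semi-openness supplied by Proposition \ref{consective-semiopen-all-semiopen}, this makes every $p_\alpha^\beta$ a semi-open retraction; hence $\mathbb{S}$ exhibits its inverse limit as a member of $\mathcal{RS}$, as required. I expect the limit-stage construction of $i_\alpha^\lambda$ to be the main obstacle, as it is the only step that cannot be obtained by finite composition and that truly invokes the continuity hypothesis.
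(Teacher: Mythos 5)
Your proof is correct and follows the same decomposition as the paper: semi-openness of all bonding maps from Proposition~\ref{consective-semiopen-all-semiopen}, plus the observation that every $p_\alpha^\beta$ is a retraction. The only difference is that the paper obtains the latter by citing \cite[Lemma~3.2]{KM}, whereas you reprove that lemma inline via the transfinite recursion on right inverses (with continuity of $\mathbb{S}$ invoked exactly at limit stages, as you anticipated); the underlying argument is essentially the same.
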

\begin{proof}
The result follows from Proposition \ref{consective-semiopen-all-semiopen}, having in mind that \cite[Lemma~3.2]{KM} ensures that $p_{\alpha}^{\beta}$ is a retraction, for every $\alpha \le \beta$.
\end{proof}

Given the tight connection between inverse limits and retractional skeletons already explored in Section \ref{sec: 2}, we can now provide a useful equivalent description of the class $\mathcal{RS}$ via retractional skeletons.

\begin{lemma}\label{RS-skeleton}
If $K$ is a compact space, then the following conditions are equivalent:
\begin{enumerate}
    \item $K$ belongs to $\mathcal{RS}$
    \item $K$ admits a retractional skeleton $(R_\alpha)_{\alpha \in \omega_1}$ such that $R_\alpha:K \to R_\alpha[K]$ is semi-open, for every $\alpha \in \omega_1$.
\end{enumerate}
\end{lemma}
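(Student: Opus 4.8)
The plan is to establish the two implications separately, moving between the inverse system $\mathbb{S}=\big((K_\alpha)_{\alpha\in\omega_1},(p_\alpha^\beta)_{\alpha\le\beta}\big)$ and the skeleton $(R_\alpha)_{\alpha\in\omega_1}$ through the canonical dictionary $K_\alpha=R_\alpha[K]$, $p_\alpha^\beta=R_\alpha|_{K_\beta}$ on one side, and $R_\alpha=i_\alpha\circ p_\alpha$ (with $i_\alpha\colon K_\alpha\to K$ the right inverse of the projection $p_\alpha$) on the other. This is the same correspondence already exploited in the proof of Theorem \ref{semiEberlein-w1-Shrinking-InverseLimit}, so the only genuinely new ingredient to monitor is how semi-openness transfers across it.

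For the implication $(1)\Rightarrow(2)$, I would start from an inverse limit representation of $K$ via $\mathbb{S}$ with semi-open retractions as bonding maps. Since the bonding maps are retractions, they provide a right inverse $\mathbb{I}=\{i_\alpha^\beta\}$ of $\mathbb{S}$ and hence, by \cite[Lemma~3.1]{KM}, right inverses $i_\alpha\colon K_\alpha\to K$ of the projections $p_\alpha$, exactly as in Theorem \ref{semiEberlein-w1-Shrinking-InverseLimit}. Setting $R_\alpha:=i_\alpha\circ p_\alpha$, the verification that $(R_\alpha)_{\alpha\in\omega_1}$ is a retractional skeleton on $K$ is identical to the one performed there for the maps $q_\alpha=i_\alpha\circ r_\alpha$. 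It then remains only to check that $R_\alpha\colon K\to R_\alpha[K]$ is semi-open. Here I would note that $R_\alpha[K]=i_\alpha[K_\alpha]$ and that $i_\alpha$, being an injective continuous map from a compact space into a Hausdorff space, is a homeomorphism of $K_\alpha$ onto $i_\alpha[K_\alpha]$; therefore $R_\alpha\colon K\to R_\alpha[K]$ is semi-open if and only if $p_\alpha\colon K\to K_\alpha$ is semi-open, and the latter holds by Lemma \ref{ps-semiopen-equiv-pst-semiopen} because every bonding map $p_\alpha^\beta$ is semi-open.

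For the implication $(2)\Rightarrow(1)$, I would put $K_\alpha:=R_\alpha[K]$, a metrizable compactum by condition (i) of Definition \ref{def: r-skeleton}; condition (ii) gives $K_\alpha\subseteq K_\beta$ for $\alpha\le\beta$, so $p_\alpha^\beta:=R_\alpha|_{K_\beta}\colon K_\beta\to K_\alpha$ is a genuine retraction, and a short computation with (ii) shows that $\mathbb{S}=\big((K_\alpha)_{\alpha\in\omega_1},(p_\alpha^\beta)_{\alpha\le\beta}\big)$ is an inverse system satisfying $R_\alpha=p_\alpha^\beta\circ R_\beta$. Semi-openness of each $p_\alpha^\beta$ is then immediate from Lemma \ref{quotient-semiopen}, applied with $q=R_\beta$, $h=R_\alpha$ and $\bar h=p_\alpha^\beta$. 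That $\big(K,(R_\alpha)_{\alpha\in\omega_1}\big)$ is a cone over $\mathbb{S}$ follows from the identity $R_\alpha=p_\alpha^\beta\circ R_\beta$, and condition (iv), which yields $x=\lim_\alpha R_\alpha(x)$ for every $x\in K$, shows that the projections $R_\alpha$ separate the points of $K$; by the point-separation characterization of inverse limits recalled in Section \ref{sec: 2}, $K$ is the inverse limit of $\mathbb{S}$.

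The step I expect to be the main obstacle is the continuity of $\mathbb{S}$. One must show, for every up-directed $T\subseteq\omega_1$ admitting a supremum $t=\sup T$, that $\big(K_t,(p_s^t)_{s\in T}\big)$ is the inverse limit of $\mathbb{S}|_T$, i.e.\ that $\{p_s^t\}_{s\in T}$ separates the points of $K_t$. When $t\in T$ this is trivial, so the substantive case is that of a limit ordinal $t$ with $T$ cofinal in $t$. The key observation is that $\cf(t)=\omega$, since $t<\omega_1$; hence I may extract an increasing sequence $(s_n)_{n\in\omega}$ from $T$ with $\sup_n s_n=t$, and for distinct $x,y\in K_t=R_t[K]$ condition (iii) gives $x=R_t(x)=\lim_n R_{s_n}(x)$ and likewise for $y$, forcing $R_{s_n}(x)\neq R_{s_n}(y)$, that is $p_{s_n}^t(x)\neq p_{s_n}^t(y)$, for some $n$. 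Thus the sequential continuity (iii) of the skeleton is exactly what upgrades to inverse-system continuity over arbitrary up-directed index sets, the passage being legitimate precisely because every limit ordinal below $\omega_1$ has countable cofinality.
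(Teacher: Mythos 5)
Your proposal is correct and follows essentially the same route as the paper: the dictionary $R_\alpha=i_\alpha\circ p_\alpha$ in one direction and $p_\alpha^\beta=R_\alpha|_{R_\beta[K]}$ in the other, with semi-openness transferred via Lemma \ref{ps-semiopen-equiv-pst-semiopen} and Lemma \ref{quotient-semiopen}. The only differences are expository: you spell out the transfer of semi-openness through the homeomorphism $i_\alpha\colon K_\alpha\to R_\alpha[K]$ (which the paper dismisses as ``easy to see'') and you verify the continuity of the inverse system directly via the countable cofinality of limit ordinals below $\omega_1$, where the paper instead cites \cite[Lemma~3.4]{KM}.
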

\begin{proof}
Assume that $K$ belongs to $\mathcal{RS}$ and let $\mathbb{S}=\big( (K_\alpha)_{\alpha \in \omega_1}, (p_{\alpha}^{\beta})_{\alpha \le \beta}\big)$ be a continuous inverse system of compact metric spaces such that each $p_{\alpha}^{\beta}$ is a semi-open retraction and let $\{p_\alpha:K\to K_\alpha, \alpha \in \omega_1\}$ be such that $\big(K, (p_\alpha)_{\alpha \in \omega_1}\big)$ is the inverse limit of $\mathbb{S}$. Let $\{i_\alpha:K_\alpha \to K, \alpha \in \omega_1\}$ be the family of maps whose existence is ensured by \cite[Lemma~3.1 and Lemma~3.2]{KM} and for every $\alpha \in \omega_1$ set $R_\alpha:=i_\alpha \circ p_\alpha:K \to K$. It follows from the continuity of $\mathbb{S}$ and \cite[Lemma~3.3]{KM} that $(R_\alpha)_{\alpha \in \omega_1}$ is a retractional skeleton on $K$. Moreover, since Lemma \ref{ps-semiopen-equiv-pst-semiopen} ensures that each $p_{\alpha}$ is semi-open, it is easy to see that $R_\alpha:K \to R_\alpha[K]$ is semi-open.
Now assume (2) and for each $\alpha \le \beta$, define the retraction $p_{\alpha}^{\beta}:R_\beta[K] \to R_\alpha[K]$ as $p_{\alpha}^{\beta} :=R_\alpha|_{R_\beta[K]}$. It follows from conditions (i) and (iii) of Definition \ref{def: r-skeleton} and \cite[Lemma~3.4]{KM} that $\big(K, (R_\alpha)_{\alpha \in \omega_1}\big)$ is the inverse limit of the continuous inverse system of compact metric spaces $\big((R_\alpha[K]) _{\alpha \in \omega_1}, (p_{\alpha}^{\beta})_{\alpha \le \beta}\big)$. Finally, the fact that each $p_{\alpha}^{\beta}$ is semi-open follows from Lemma \ref{ps-semiopen-equiv-pst-semiopen}.
\end{proof}

Quite clearly, compact metric spaces and the cubes $2^{\omega_1}$ and $[-1,1]^{\omega_1}$ are examples of compacta that belong to $\mathcal{RS}$. More generally, retracts of such cubes also belong to $\mathcal{RS}$, \cite[Corollary 4.3]{KL}. On the other hand, we shall now show that $\mathcal{RS}$ is indeed a proper subclass of the class of small semi-Eberlein compacta. Given a set $\Gamma$ and $x \in 2^{\Gamma}$, we denote by $\suppt(x)$ the support of $x$, i.e., $\suppt(x)=\{\gamma \in \Gamma \colon x(\gamma) \ne 0\}$. Moreover, given a topological space $K$, we denote the set of its isolated points by $I(K)$.

\begin{theorem}\label{key-for-scattered}
Let $K \subset 2^{\Gamma}$ be a compact space. Assume that there exists an uncountable subset $A$ of $I(K)$ such that $\suppt(x)$ is finite, for every $x \in A$. Then $K$ does not belong to $\mathcal{RS}$.
\end{theorem}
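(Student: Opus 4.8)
The plan is to argue by contradiction through the retractional-skeleton description of $\mathcal{RS}$ in Lemma \ref{RS-skeleton}. So I assume $K\in\mathcal{RS}$ and fix a retractional skeleton $(R_\alpha)_{\alpha\in\omega_1}$ on $K$ with each $R_\alpha\colon K\to R_\alpha[K]$ semi-open. The observation that drives everything is that semi-openness preserves isolation: if $x\in I(K)$ and $R_\alpha(x)=x$, then $\{x\}$ is open, so $R_\alpha[\{x\}]=\{x\}$ has nonempty interior in $R_\alpha[K]$, i.e. $x\in I(R_\alpha[K])$. As $R_\alpha[K]$ is metrizable, $I(R_\alpha[K])$ is countable, whence $A\cap R_\alpha[K]\subseteq I(R_\alpha[K])$ is countable for every $\alpha$.

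Next I normalize $A$ combinatorially. Using the $\Delta$-system lemma I pass to an uncountable $A'\subseteq A$ whose supports form a sunflower with finite root $F$, and since $2^F$ is finite I thin out further so that $x\cut F$ equals a fixed $c\in 2^F$ for all $x\in A'$ (discarding the at most one point with $\suppt(x)=F$). Letting $z_0\in 2^\Gamma$ be the point with $z_0\cut F=c$ and $z_0\equiv 0$ off $F$, a coordinatewise check shows that every sequence of distinct points of $A'$ converges to $z_0$: on $F$ all points agree with $z_0$, while off $F$ each coordinate is nonzero for at most one member of $A'$. In particular $z_0\in K$ is the unique accumulation point of $A'$, and $z_0\notin A'$ since it is not isolated.

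The core of the argument is to isolate a single index $\alpha$ at which the picture collapses. Since $A'\subseteq D(\mathfrak s)=\bigcup_\alpha R_\alpha[K]$ and the images $R_\alpha[K]$ increase with $\alpha$, the countable sets $A'\cap R_\alpha[K]$ increase to the uncountable set $A'$; a short argument at a countable limit stage then yields some $\alpha$ with $A'\cap R_\alpha[K]$ infinite. Fixing such $\alpha$ and picking distinct $w_n\in A'\cap R_\alpha[K]$ with $w_n\to z_0$, closedness of $R_\alpha[K]$ gives $z_0\in R_\alpha[K]$, where $z_0$ is not isolated. I then analyse $W:=R_\alpha[A']\setminus\{z_0\}$. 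Lifting any sequence of distinct points of $W$ to distinct preimages in $A'$ and using continuity of $R_\alpha$ together with the convergence above, every such sequence converges to $z_0$; hence every neighborhood of $z_0$ is cofinite in $W$, and a countable neighborhood base at $z_0$ in the metrizable space $R_\alpha[K]$ forces $W$ to be countable. The same convergence argument shows each fibre $R_\alpha^{-1}(w)\cap A'$ with $w\neq z_0$ is finite, so $A'\cap R_\alpha^{-1}[W]$ is countable.

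Finally I reach the contradiction. As $R_\alpha[A']\subseteq W\cup\{z_0\}$ and $A'\cap R_\alpha^{-1}[W]$ is countable while $A'$ is uncountable, the set $U:=\{x\in A'\colon R_\alpha(x)=z_0\}$ is uncountable; being contained in $I(K)$, it is a nonempty open subset of $K$. But $R_\alpha[U]=\{z_0\}$ and $z_0$ is not isolated in $R_\alpha[K]$, so $R_\alpha[U]$ has empty interior, contradicting semi-openness of $R_\alpha$. I expect the delicate step to be the metrizability obstruction in the previous paragraph, namely ruling out that uncountably many members of $A'$ survive in one metrizable image $R_\alpha[K]$: this is precisely where finiteness of supports, funnelled through the single accumulation point $z_0$, produces the nonmetrizable one-point-compactification pattern that a semi-open retraction cannot absorb.
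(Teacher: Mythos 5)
Your proof is correct and takes essentially the same route as the paper's: a contradiction via Lemma \ref{RS-skeleton}, the $\Delta$-system lemma producing a root to which every injective sequence from the refined uncountable set converges, a countable supremum $\alpha$ placing that root in $K_\alpha$ as a non-isolated point, and a counting argument in the metrizable image $K_\alpha$ that clashes with semi-openness. The only (cosmetic) difference is the endgame: the paper pigeonholes $B$ over the countable set $I(K_\alpha)$ and derives the contradiction from continuity, whereas you show the fiber over $z_0$ is uncountable, hence a nonempty open set mapped onto the non-isolated singleton $\{z_0\}$, contradicting semi-openness directly.
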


\begin{proof}
Suppose by contradiction that $K \in \mathcal{RS}$. Let $(R_\alpha)_{\alpha \in \omega_1}$ be the retractional skeleton given by Lemma \ref{RS-skeleton} and for each $\alpha \in \omega_1$, set $K_\alpha=R_\alpha[K]$. In what follows, we identify subsets of $\Gamma$ with their characteristic functions. It follows from the $\Delta$-system Lemma that there exist a finite subset $\Delta$ of $\Gamma$ and an uncountable subset $B$ of $A$ such that $b_1 \cap b_2=\Delta$, for every $b_1, b_2 \in B$ with $b_1 \ne b_2$. We claim that there exists $\alpha \in \omega_1$ such that $\Delta \in K_\alpha \setminus I(K_\alpha)$. 
Indeed, it is easy to see that any injective sequence of elements of $B$ converges to $\Delta$ in $2^{\Gamma}$. Fix an injective sequence $(b_n)_{n \in \omega}$ of elements of $B$. Since $\bigcup_{\alpha \in \omega_1} K_\alpha$ is dense in $K$, we have that $I(K) \subset \bigcup_{\alpha \in \omega_1} K_\alpha$ and therefore, for every $n \in \omega$, there exists $\alpha_n \in \omega_1$ such that $b_n \in K_{\alpha_n}$. If $\alpha=\sup_{n \in \omega} \alpha_n$, then $b_n \in K_\alpha$, for every $n \in \omega$, which implies that $\Delta \in K_\alpha$ and of course $\Delta \notin I(K_\alpha)$. It follows from the fact that $R_\alpha:K \to K_\alpha$ is semi-open that $R_\alpha(z) \in I(K_\alpha)$, for every $z \in I(K)$. Thus we have that $B=\bigcup_{x \in I(K_\alpha)} R_{\alpha}^{-1}[\{x\}] \cap B$, which implies that there exists $x \in I(K_\alpha)$ such that $R_{\alpha}^{-1}[\{x\}] \cap B$ is uncountable, since $I(K_\alpha)$ is countable. Let $(b_n)_{n \in \omega}$ be an injective sequence of elements of $R_{\alpha}^{-1}[\{x\}] \cap B$. As argued above, we have that $(b_n)_{n \in \omega}$ converges to $\Delta$ in $K$ and thus $\big(R_\alpha(b_n)\big)_{n \in \omega}$ converges to $R_\alpha(\Delta)$. But this is a contradiction, because $R_\alpha(\Delta)=\Delta$ and $R_\alpha(b_n)=x$, for every $n \in \omega$.
\end{proof}

We recall that for a set $\Gamma$, we define $\Sigma(\Gamma)=\{x \in \mathbb R^{\Gamma}: \suppt(x) \ \text{is countable}\}$ and that the small $\sigma$-product of real lines is defined by $\sigma(\Gamma)=\{x\in \R^\Gamma: \suppt(x) \mbox{ is finite}\}$. Plainly, every compact space $K \subset \mathbb{R}^{\Gamma}$ such that $\sigma(\Gamma)\cap K$ is dense in $K$ is a fortiori semi-Eberlein.

\begin{corollary}\label{scattered-corson}
If $K\subset 2^{\Gamma}$ is a nonmetrizable scattered compact space such that $\sigma(\Gamma)\cap K$ is dense in $K$, then $K$ does not belong to $\mathcal{RS}$. In particular, nonmetrizable scattered Corson compacta do not belong to $\mathcal{RS}$.
\end{corollary}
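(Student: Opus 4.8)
The plan is to deduce both assertions from Theorem~\ref{key-for-scattered}, so that the whole task reduces to producing, inside a suitable copy of $K$ in some $2^{\Gamma}$, an uncountable family of isolated points with finite support.

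For the first assertion I start from the embedding $K\subset 2^{\Gamma}$ given in the statement and take $A:=I(K)$. Since $K$ is scattered, $I(K)$ is dense in $K$: every nonempty relatively open subset of a scattered space, being itself scattered, has a point isolated in it, and such a point is then isolated in $K$. Next, each $x\in I(K)$ is an open singleton of $K$, so the density of $\sigma(\Gamma)\cap K$ forces $x\in\sigma(\Gamma)$, i.e. $\suppt(x)$ is finite. It therefore only remains to check that $I(K)$ is uncountable, and this is where nonmetrizability enters. If $I(K)$ were countable, then $S:=\bigcup_{x\in I(K)}\suppt(x)$ would be a countable subset of $\Gamma$; since $I(K)$ is dense we have $K=\overline{I(K)}$, and because for every $\gamma\in\Gamma\setminus S$ each $x\in I(K)$ satisfies $x(\gamma)=0$, continuity of the coordinate projections gives $y(\gamma)=0$ for every $y\in K$. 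Thus $K$ would embed into $\{y\in 2^{\Gamma}\colon \suppt(y)\subseteq S\}\cong 2^{S}$, which is metrizable, contradicting the hypothesis. Hence $A=I(K)$ is an uncountable subset of $I(K)$ all of whose elements have finite support, and Theorem~\ref{key-for-scattered} yields $K\notin\mathcal{RS}$.

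For the \emph{In particular} part I reduce a nonmetrizable scattered Corson compactum $K$ to the situation just treated. A scattered compact space is totally disconnected, hence zero-dimensional, so $K$ embeds into some $2^{\Gamma}$. The key external input is the classical fact that a scattered Corson compactum is (strong) Eberlein; equivalently, it admits an embedding into the two-valued $\sigma$-product $\sigma(\Gamma)\cap 2^{\Gamma}=\{x\in 2^{\Gamma}\colon \suppt(x)\text{ finite}\}$. Fixing such an embedding, every point of $K$ has finite support, so $\sigma(\Gamma)\cap K=K$ is trivially dense; as $K$ is still nonmetrizable and scattered, the first part applies and gives $K\notin\mathcal{RS}$.

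The internal steps — density and finite support of the isolated points, together with the metrizability argument — are routine. I expect the only genuinely delicate point to be the external one invoked in the last paragraph, namely that a scattered Corson (equivalently, scattered Eberlein) compactum can be realized with \emph{finite}, rather than merely countable, supports; a naive $\sigma$-point-finite separating family lands one in a $\Sigma$-product with countable supports, so one really needs the stronger ``strong Eberlein'' representation. Pinning this down, or citing the precise statement in the Eberlein-compact literature, is where the main care is needed, and everything else is bookkeeping.
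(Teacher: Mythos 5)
Your proposal is correct and follows essentially the same route as the paper: take $A=I(K)$, note that density of $\sigma(\Gamma)\cap K$ forces isolated points to have finite support, rule out countability of $I(K)$ via a metrizability contradiction, and reduce the Corson case to the first part through Alster's theorem that scattered Corson compacta are strongly Eberlein. The only (harmless) difference is in the uncountability step, where you argue directly that all supports would lie in a countable set $S$ so that $K\subset 2^{S}$, whereas the paper observes that $K\subset\Sigma(\Gamma)$ would make $K$ a separable Corson compactum, hence metrizable.
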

\begin{proof}
According to Theorem \ref{key-for-scattered} and using that $I(K) \subset \sigma(\Gamma)\cap K$, in order to conclude that $K$ does not belong to $\mathcal{RS}$, it is enough to show that $I(K)$ is uncountable. Note that if  $I(K)$ is countable, then $K \subset \Sigma(\Gamma)$, since $K$ is scattered, $I(K) \subset \Sigma(\Gamma)$ and $\Sigma(\Gamma)$ is countably closed in $\mathbb{R}^{\Gamma}$. But this implies that $K$ is a separable Corson compact space and therefore metrizable. Now assume that $K$ is a nonmetrizable scattered Corson compact space. Then the result follows from \cite[Corollary~1]{A}, since it ensures that $K$ is strongly Eberlein, i.e., we may assume that $K \subset 2^{\Gamma} \cap \sigma(\Gamma)$, for some set $\Gamma$.
\end{proof}

Quite on the opposite extreme of the connectedness spectrum, we now show that the unit ball of the Hilbert space $\ell_2(\omega_1)$, endowed with the weak topology, does not belong to $\mathcal{RS}$. Recall that $\ell_2(\omega_1)$ denotes the Hilbert space $\{x\colon \omega_1 \to\R  \colon \sum_{\gamma \in \omega_1} \vert x(\gamma)\vert^2<\infty\}$, endowed with the norm $\n_2$ given by $\|x\|_2:=(\sum_{\gamma \in \omega_1} \vert x(\gamma) \vert^2)^{1/2}$. It follows from the reflexivity of $\ell_2(\omega_1)$ that its closed unit ball $B_{\ell_2(\omega_1)}$, endowed with the weak topology, is compact and thus it is an Eberlein compact space. Finally, recall that the weak topology and the product topology coincide on $B_{\ell_2(\omega_1)} \subset [-1, 1]^{\omega_1}$.

\begin{theorem}\label{BallHilbert-Not-RS}
If $K=B_{\ell_2(\omega_1)} \subset [-1,1]^{\omega_1}$, then $K$ does not belong to $\mathcal{RS}$.
\end{theorem}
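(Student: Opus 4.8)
The plan is to assume $K\in\mathcal{RS}$ and derive a contradiction. By Lemma~\ref{RS-skeleton} there is a retractional skeleton $(R_\alpha)_{\alpha\in\omega_1}$ on $K$ with each $R_\alpha\colon K\to K_\alpha:=R_\alpha[K]$ semi-open and each $K_\alpha$ metrizable. Two structural facts set the stage. First, since every vector of $\ell_2(\omega_1)$ has countable support, $K\subset\Sigma(\omega_1)$ is Corson, so the induced set of any retractional skeleton equals the whole space; in particular $\bigcup_{\alpha}K_\alpha=K$. Secondly---and this is where the connectedness singled out in the introduction enters---$K$ is convex, hence connected, so each $K_\alpha$ is a \emph{metric continuum}, nondegenerate for all large $\alpha$. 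I record the one piece of geometry I will use: for pairwise distinct indices the standard vectors satisfy $e_{\gamma_n}\to 0$ weakly, because $\langle e_{\gamma_n},y\rangle=y(\gamma_n)\to 0$ for every $y\in\ell_2(\omega_1)$.

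Fix once and for all an $\alpha$ with $0\in K_\alpha$ and $K_\alpha$ nondegenerate. The next step, which uses no semi-openness, shows that $R_\alpha$ collapses almost all the $e_\gamma$ to $0$: by weak continuity every injective sequence $(e_{\gamma_n})$ gives $R_\alpha(e_{\gamma_n})\to R_\alpha(0)=0$, so $0$ is the only possible accumulation point of $\{R_\alpha(e_\gamma):\gamma\in\omega_1\}$ in the separable space $K_\alpha$; hence this set is relatively discrete away from $0$ and each nonzero value is attained finitely often, so $\Omega:=\{\gamma\in\omega_1:R_\alpha(e_\gamma)=0\}$ is co-countable. By the same condensation-point argument applied to points of $K_\alpha$ lying in distinct ``spikes'', the set of indices $\gamma$ for which some $y\in K_\alpha$ has $y(\gamma)>\tfrac{4}{5}$ is countable; I call the remaining (co-countably many) indices \emph{clean}, so that $f_\gamma\le\tfrac{4}{5}$ on $K_\alpha$ for clean $\gamma$, where $f_\gamma(x)=x(\gamma)$.

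With these reductions, choose $\gamma\in\Omega$ that is clean (each condition excludes only countably many indices) and consider the convex open neighbourhood $U^{(\gamma)}:=\{x\in K:x(\gamma)>\tfrac{4}{5}\}$ of $e_\gamma$; since $2(\tfrac{4}{5})^2>1$ the family $\{U^{(\gamma)}\}_{\gamma}$ is pairwise disjoint, and cleanness gives $U^{(\gamma)}\cap K_\alpha=\emptyset$, so the interior that semi-openness produces in $R_\alpha[U^{(\gamma)}]$ is a ``genuine folding'' rather than a free consequence of $R_\alpha|_{K_\alpha}=\mathrm{id}$. Moreover each $\overline{U^{(\gamma)}}=\{x(\gamma)\ge\tfrac{4}{5}\}$ is convex, so its image $A_\gamma:=R_\alpha[\overline{U^{(\gamma)}}]$ is a subcontinuum of $K_\alpha$ containing $R_\alpha(e_\gamma)=0$, and as the threshold tends to $1$ the corresponding subcontinua shrink in diameter to $\{0\}$ (their preimages shrink in norm to $e_\gamma$). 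The goal is to contradict semi-openness by proving that $A_\gamma$ is nowhere dense in $K_\alpha$; the model case is the canonical linear skeleton, where $A_\gamma$ lies in the norm-ball of radius $\sqrt{1-(\tfrac{4}{5})^2}$, which has empty weak interior in the infinite-dimensional continuum $K_\alpha$.

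The main obstacle is exactly this last claim for an \emph{arbitrary} semi-open retraction, where $R_\alpha$ need be neither affine nor norm-decreasing. The route I would take is to suppose $\mathrm{int}_{K_\alpha}A_\gamma\neq\emptyset$ for uncountably many clean $\gamma\in\Omega$ and, using a countable base of $K_\alpha$ to pigeonhole, extract a single nonempty open $B$ with $B\subset A_\gamma$ for uncountably many such $\gamma$; fixing $w\in B$ then yields points $x_\gamma$ with $x_\gamma(\gamma)\ge\tfrac{4}{5}$ and $R_\alpha(x_\gamma)=w$. A weak cluster point $x^\ast$ of $(x_\gamma)$ satisfies $R_\alpha(x^\ast)=w$ while all its coordinates are $\le\sqrt{1-(\tfrac{4}{5})^2}$, since the large coordinates sit at distinct, escaping positions. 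Converting the coexistence of these ``spread'' fibres over a common open set into an actual contradiction---presumably by exploiting the connectedness of the continua $K_\alpha$ together with the fact that the shrinking subcontinua $A_\gamma$ accumulate at $0$---is the crux of the argument and the step I expect to require the most care.
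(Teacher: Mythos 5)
Your proposal has a genuine gap, and it is precisely the one you flag yourself at the end: everything reduces to showing that $A_\gamma=R_\alpha\bigl[\overline{U^{(\gamma)}}\bigr]$ has empty interior in $K_\alpha$ for some clean $\gamma\in\Omega$, and the only hypotheses you are using at that point are that $R_\alpha$ is a semi-open retraction onto a nondegenerate metrizable continuum $K_\alpha$ containing $0$. That is not enough. Consider the single retraction $R(x)=x(0)e_0$ onto $K_\alpha=[-1,1]e_0$: it is weakly continuous, it is semi-open (for every nonempty basic open $U\subset K$ the set of attainable values of $x(0)$ contains a nondegenerate interval), one has $R(e_\gamma)=0$ and cleanness for every $\gamma\neq 0$, and yet $A_\gamma=\{te_0:|t|\le 3/5\}$ has nonempty interior in $K_\alpha$ for every such $\gamma$. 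So the claim you are trying to prove is false at the level of generality at which you are working; your cluster-point/pigeonhole argument can only ever yield that some nonempty open $B\subset K_\alpha$ is contained in $R_\alpha\bigl[\tfrac35 B_{\ell_2(\omega_1)}\bigr]$, which is not a contradiction for a general retraction. To close the argument you would have to import substantially more information from the skeleton structure (the commutation relations and the convergence $R_\alpha(x)\to x$), and you have not indicated how.

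The paper takes a different and much shorter route that sidesteps this entirely. It introduces the canonical coordinate-truncation skeleton $Q_\alpha(x)=x\cdot\mathds{1}_{[0,\alpha)}$, notes that since $K$ is Eberlein both skeletons induce all of $K$, and then invokes the rigidity result \cite[Theorem~21]{CCS} to conclude that $R_\beta=Q_\alpha$ for some $\beta$ and some infinite $\alpha$. For the concrete map $Q_\alpha$ the failure of semi-openness is a two-line Hilbert-norm computation: every nonempty relatively open subset of $Q_\alpha[K]$ contains a unit vector, while $Q_\alpha$ maps the nonempty open set $\{x\in K: x(\gamma)\neq 0\}$ (for any $\gamma\ge\alpha$) into the open unit ball, since $\|x\|_2^2\ge\|Q_\alpha(x)\|_2^2+|x(\gamma)|^2$. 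Your preliminary reductions (co-countability of $\Omega$, the ccc argument for clean indices, the disjointness of the sets $U^{(\gamma)}$) are all correct, but without a tool like \cite[Theorem~21]{CCS} that lets you replace the arbitrary semi-open retraction by this canonical one, the argument does not reach a contradiction.
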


\begin{proof} Towards a contradiction, assume that $K\in\mathcal{RS}$. Then by Lemma \ref{RS-skeleton} there exists a retractional skeleton  $\mathfrak{s}=(R_{\alpha})_{\alpha\in\omega_1}$ on $K$ such $R_{\alpha}:K\to R_{\alpha}[K]$ is semi-open, for every $\alpha\in\omega_1$. Moreover it is easy to see that $\mathfrak{s}'=(Q_\alpha)_{\alpha \in \omega_1}$ is also a retractional skeleton on $K$, where $Q_\alpha:K \to K$ is given by  
$$Q_\alpha(x)(\gamma)=
\begin{cases}
x(\gamma) & \gamma<\alpha \\
0 & \gamma \ge \alpha
\end{cases}
\qquad (x \in K).$$
Since $K$ is Eberlein, \cite[Theorem 3.11]{C} ensures that $D(\mathfrak{s})=D(\mathfrak{s}')=K$. Therefore it follows from \cite[Theorem 21]{CCS}
applied to $\mathfrak{s}$ and $\mathfrak{s}'$ that there exist  $\alpha, \beta< \omega_1$ such that $\alpha$ is infinite and $Q_\alpha =R_\beta$, which implies that $Q_\alpha: K \to Q_\alpha[K]$ is semi-open. However, we shall show that this is not the case. Fix $\gamma\in \omega_1$ with $\alpha<\gamma$ and consider the nonempty open subset $W=\{x\in K\colon x(\gamma)\neq 0\}$ of $K$. Let $U$ be a nonempty basic open subset of $Q_\alpha[K]$; then there exist $u\in Q_\alpha[K]$, $\alpha_1,\dots,\alpha_n\in \alpha$ and $\e>0$ such that
$U=\left\{ x \in Q_\alpha[K] \colon |x(\alpha_i)- u(\alpha_i)|< \e,\ i=1,\dots,n \right\}$. Note that there exists $x\in U$ such that $\|x\|_2=1$. On the other hand, for every $y \in W$, we have that
$$1\ge \|y\|_2^2\ge \|Q_\alpha(y)\|_2^2 + |y(\gamma)|^2 > \|Q_\alpha(y)\|_2^2,$$
which implies that $\|Q_\alpha(y)\|_2 < 1$. Therefore, there is no $y\in W$ with $Q_\alpha(y)=x$ and thus $U\not\subset Q_\alpha[W]$. Since $U$ was arbitrary, we conclude that $Q_\alpha[W]$ has empty interior in $Q_\alpha[K]$, as desired.
\end{proof}

\begin{remark} For the reader inclined to Banach space theory, we observe that the above result actually holds in a more general setting. Indeed, a similar argument shows the following: if $X$ is a WLD Banach space and $X^*$ is strictly convex, then the dual unit ball of $X$, endowed with the weak$^*$ topology, does not belong to $\mathcal{RS}$. In particular, this applies to the unit ball of $\ell_p(\omega_1)$, endowed with the weak topology, for $1<p<\infty$. Finally, the same proof as for $\ell_2(\omega_1)$ also gives that the unit ball of $\ell_1(\omega_1)$, endowed with the weak$^*$ topology induced by $c_0(\omega_1)$, does not belong to $\mathcal{RS}$ either.
\end{remark}

\subsection{Stability results}\label{subsec: stability-RS} The class $\mathcal{RS}$ enjoys several stability properties. Needless to say, such properties yield several further examples of compacta in this class. 

\begin{proposition}\label{hereditary-clopen}
If $K$ belongs to $\mathcal{RS}$ and $L$ is a clopen subset of $K$, then $L$ belongs to $\mathcal{RS}$.
\end{proposition}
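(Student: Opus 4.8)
The plan is to use the retractional-skeleton characterization of $\mathcal{RS}$ provided by Lemma \ref{RS-skeleton}. Since $K\in\mathcal{RS}$, there is a retractional skeleton $(R_\alpha)_{\alpha\in\omega_1}$ on $K$ such that each $R_\alpha\colon K\to R_\alpha[K]$ is semi-open. Given a clopen subset $L$ of $K$, the natural idea is to restrict each $R_\alpha$ to $L$ and show that the restricted family is a retractional skeleton on $L$ consisting of semi-open retractions; by Lemma \ref{RS-skeleton} this would immediately yield $L\in\mathcal{RS}$.

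First I would observe that $L$ is itself a compact space (being clopen, hence closed, in the compact space $K$) and that $L$ is open in $K$. The key preliminary step is to show that $R_\alpha[L]$ is clopen in $K_\alpha:=R_\alpha[K]$ for suitably large $\alpha$, or at least that the restrictions $R_\alpha\cut L$ eventually map $L$ into $L$. Concretely, since $\bigcup_\alpha R_\alpha[K]$ is dense in $K$ and $L$ is open, one expects that for each $x\in L$ one has $R_\alpha(x)\in L$ once $\alpha$ is large, using condition (iv) of Definition \ref{def: r-skeleton} together with the fact that $L$ is a neighbourhood of $x$. The natural move is to pass to a cofinal (hence still $\sigma$-complete and up-directed) subset $\Sigma'\subset\omega_1$ of indices $\alpha$ for which $R_\alpha[L]\subset L$, and then set $R_\alpha^L:=R_\alpha\cut L\colon L\to L$. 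I would verify that each $R_\alpha^L$ is a continuous internal retraction of $L$ and that conditions (i)--(iv) of Definition \ref{def: r-skeleton} transfer from $(R_\alpha)$ to $(R_\alpha^L)$: metrizability of $R_\alpha^L[L]\subset R_\alpha[L]$ is inherited as a subspace of a metrizable compactum, the compatibility $R_s^L=R_t^L\circ R_s^L=R_s^L\circ R_t^L$ and the sequential continuity (iii) follow by restriction, and the limit condition (iv) holds because $R_\alpha(x)\to x$ already holds in $K$ for $x\in L$.

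The heart of the matter is to show that each $R_\alpha^L\colon L\to R_\alpha^L[L]$ is semi-open. Here I would use that $L$ is open in $K$: given a nonempty open subset $U$ of $L$, $U$ is also open in $K$, so by semi-openness of $R_\alpha\colon K\to K_\alpha$ the image $R_\alpha[U]$ has nonempty interior in $K_\alpha$. The task is to conclude that $R_\alpha[U]=R_\alpha^L[U]$ has nonempty interior relative to the subspace $R_\alpha^L[L]$. This should follow from showing that $R_\alpha^L[L]=R_\alpha[L]$ is a neighbourhood (indeed clopen) in $K_\alpha$ of the points of the nonempty interior of $R_\alpha[U]$, which in turn relies on $L$ being clopen and on $R_\alpha$ being a retraction. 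I expect \textbf{this step} to be the main obstacle, since it is where the clopen (not merely open) hypothesis must genuinely intervene: one needs $R_\alpha[L]$ to be open in $K_\alpha$ so that relative interiors agree with interiors computed in $K_\alpha$.

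Once semi-openness of the restricted retractions is established, the proof concludes by invoking Lemma \ref{RS-skeleton} in the direction (2)$\Rightarrow$(1): the family $(R_\alpha^L)_{\alpha\in\Sigma'}$, after reindexing the $\sigma$-complete up-directed set $\Sigma'$ by $\omega_1$ (which is possible since $\Sigma'$ is cofinal in $\omega_1$, hence of cofinality $\omega_1$ and $\sigma$-complete), is a retractional skeleton on $L$ whose retractions onto their ranges are semi-open. Therefore $L\in\mathcal{RS}$, as desired.
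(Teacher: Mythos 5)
Your proposal follows essentially the same route as the paper: restrict the semi-open skeleton from Lemma~\ref{RS-skeleton} to $L$ along the cofinal $\sigma$-closed set of indices $\alpha$ with $R_\alpha[L]\subset L$ (the paper outsources the fact that this set is cofinal and that the restrictions form a retractional skeleton on $L$ to \cite[Lemma~3.5]{C}, using that $L$ is open to get $D(\mathfrak s)\cap L$ dense in $L$, and that $L$ is closed to get compactness), and then apply Lemma~\ref{RS-skeleton} again. The only point worth adding is that the step you single out as the main obstacle is in fact immediate: for such $\alpha$ and a nonempty open $U\subset L$, the set $R_\alpha[U]$ is contained in $R_\alpha[L]$ and has nonempty interior in $R_\alpha[K]$, and a subset of a subspace that has nonempty interior in the ambient space automatically has nonempty interior in the subspace, so one never needs $R_\alpha[L]$ to be open (let alone clopen) in $R_\alpha[K]$.
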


\begin{proof}
Let $\mathfrak{s}=(R_\alpha)_{\alpha \in \omega_1}$ be the retractional skeleton given by Lemma \ref{RS-skeleton}. Since $L$ is open, we have that $D(\mathfrak{s}) \cap L$ is dense in $L$, therefore it follows from \cite[Lemma~3.5]{C} that the set $T=\{\alpha \in \omega_1: R_\alpha[L] \subset L\}$ is a cofinal and $\sigma$-closed subset of $\omega_1$ and $(R_\alpha|_L)_{\alpha \in T}$ is a retractional skeleton on $L$. Clearly the fact that $L$ is open ensures that $R_\alpha|_L:L \to R_\alpha[L]$ is semi-open, for every $\alpha \in T$. Therefore, the result follows from Lemma \ref{RS-skeleton}, since $T$ is order-isomorphic to $\omega_1$. 
\end{proof}

\begin{remark}
Note that the class $\mathcal{RS}$ is not stable for closed subspaces in general. For instance, every small compact space embeds homeomorphically into the cube $[0,1]^{\omega_1}$ and $[0,1]^{\omega_1}$ belongs to $\mathcal{RS}$.
\end{remark}

\begin{proposition}\label{closed-product}
If $\{K^{\alpha}: \alpha\in\omega_1\}$ is a family of elements of  $\mathcal{RS}$, then $K=\prod_{\alpha\in\omega_1}K^{\alpha}$ belongs to $\mathcal{RS}$.
\end{proposition}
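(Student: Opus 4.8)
The plan is to produce directly on the product $K=\prod_{\alpha\in\omega_1}K^\alpha$ a retractional skeleton indexed by $\omega_1$ all of whose retractions are semi-open, and then to invoke Lemma~\ref{RS-skeleton}. First, for each $\alpha\in\omega_1$ I fix, by Lemma~\ref{RS-skeleton}, a retractional skeleton $(R^\alpha_\beta)_{\beta\in\omega_1}$ on $K^\alpha$ with each $R^\alpha_\beta\colon K^\alpha\to R^\alpha_\beta[K^\alpha]$ semi-open, and I fix a base point $*^\alpha\in R^\alpha_0[K^\alpha]$. Applying condition~(ii) of Definition~\ref{def: r-skeleton} to $0\le\beta$ gives $R^\alpha_0=R^\alpha_\beta\circ R^\alpha_0$, so every such $*^\alpha$ is a common fixed point of all the $R^\alpha_\beta$, a fact I use repeatedly. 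Note also that $w(K)\le\omega_1\cdot\omega_1=\omega_1$, so $K$ is small.

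The building blocks are the retractions attached to countable partial level assignments. For a function $s$ with countable domain $\operatorname{dom}s\subseteq\omega_1$ and values $s(\alpha)\in\omega_1$, set
\[
R_s(x)^\alpha=\begin{cases}R^\alpha_{s(\alpha)}(x^\alpha)&\alpha\in\operatorname{dom}s,\\ *^\alpha&\alpha\notin\operatorname{dom}s.\end{cases}
\]
Collapsing all inactive coordinates to the single point $*^\alpha$ is exactly what keeps the image $R_s[K]=\prod_{\alpha\in\operatorname{dom}s}R^\alpha_{s(\alpha)}[K^\alpha]\times\prod_{\alpha\notin\operatorname{dom}s}\{*^\alpha\}$ metrizable, since $\operatorname{dom}s$ is countable. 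Two routine coordinatewise checks, using condition~(ii) for the factor skeletons together with the fact that $*^\alpha$ is fixed, give $R_s=R_t\circ R_s=R_s\circ R_t$ whenever $\operatorname{dom}s\subseteq\operatorname{dom}t$ and $s(\alpha)\le t(\alpha)$ for every $\alpha\in\operatorname{dom}s$. Moreover each $R_s$ is a product of maps that are either semi-open surjections $R^\alpha_{s(\alpha)}$ or constant maps onto a singleton; since a product of semi-open surjections is semi-open (a nonempty basic open box is nontrivial on only finitely many coordinates, where semi-openness applies, and surjects onto the remaining factors), each $R_s\colon K\to R_s[K]$ is semi-open.

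Next I would thread these retractions along a single continuous increasing chain of length $\omega_1$. By a bookkeeping recursion I build $(s_\xi)_{\xi<\omega_1}$ with $s_\xi\le s_\eta$ for $\xi\le\eta$, taking pointwise suprema at limit stages (these remain admissible because a countable union of countable domains is countable and a countable supremum of ordinals below $\omega_1$ is below $\omega_1$), arranged so that $\bigcup_\xi\operatorname{dom}s_\xi=\omega_1$ and so that for every $\alpha$ the levels $s_\xi(\alpha)$ are eventually defined and cofinal in $\omega_1$. Setting $R_\xi:=R_{s_\xi}$, the commutation relations and continuity at countable suprema (condition~(iii)) transfer coordinatewise from the factor skeletons, while condition~(iv), that $R_\xi(x)\to x$, reduces on each of the finitely many coordinates constraining a basic neighbourhood to the convergence $R^\alpha_\beta(x^\alpha)\to x^\alpha$ furnished by condition~(iv) for $K^\alpha$, valid once $\alpha$ is active and its level is large enough. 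Hence $(R_\xi)_{\xi<\omega_1}$ is a retractional skeleton on $K$ with semi-open retractions, and Lemma~\ref{RS-skeleton} yields $K\in\mathcal{RS}$.

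The main obstacle is the tension between conditions~(i) and~(iv): metrizability forces each individual retraction to be active on only countably many of the $\omega_1$ coordinates, hence the collapse to the fixed points $*^\alpha$, yet condition~(iv) must recover all $\omega_1$ coordinates in the limit. Resolving this is precisely the role of the diagonal chain of length $\omega_1$: no single stage sees more than countably many coordinates, but the regularity of $\omega_1$ and the continuity of the chain guarantee that every coordinate is eventually activated with cofinally increasing level. A secondary point requiring care is checking that the collapse to the base points is compatible with the algebra of the skeleton, which is exactly why each $*^\alpha$ must be chosen as a common fixed point of the factor retractions.
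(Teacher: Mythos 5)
Your proof is correct and takes essentially the same approach as the paper: both reduce to Lemma~\ref{RS-skeleton} and build a product retractional skeleton whose $\xi$-th retraction applies the factor retractions on countably many coordinates and collapses the remaining ones to a base point chosen in $R_0^\alpha[K^\alpha]$, with semi-openness checked on basic open boxes. The paper just uses the particular chain where $s_\beta$ has domain $\beta$ and constant level $\beta$, which makes your bookkeeping recursion unnecessary.
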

\begin{proof}
If any $K^\alpha$ is empty, then the result is trivial. For every $\alpha \in \omega_1$, let $(R^\alpha_\beta)_{\beta \in \omega_1}$ be the retractional skeleton on $K^\alpha$ given by Lemma \ref{RS-skeleton} and for each $\alpha \in \omega_1$, fix an element $x_\alpha \in R_0^\alpha[K^\alpha]$. Fixed $\beta \in \omega_1$, define $P_\beta: K \to K$ as $P_\beta(y)(\alpha)=R_\beta^\alpha(y(\alpha))$ if $\alpha<\beta$ and $P_\beta(y)(\alpha)=x_{\alpha}$ otherwise, for every $y=(y(\alpha))_{\alpha \in \omega_1} \in K$. It is straightforward to check that $(P_\beta)_{\beta \in \omega_1}$ is a retractional skeleton on $K$. Now fix $\beta \in \omega_1$ and let us show that $P_\beta:K \to P_\beta[K]$ is semi-open. Let $F$ be a finite subset of $\omega_1$ and consider the basic open set $U=\prod_{\alpha \in F} U^\alpha \times \prod_{\alpha \notin F} K^\alpha$, where $U^\alpha$ is a nonempty open subset of $K^\alpha$, for every $\alpha \in F$. Note that $P_\beta[U]=\prod_{\alpha \in F \cap \beta} R^\alpha_\beta[U^\alpha] \times \prod_{\alpha \in \beta \setminus F} R_\beta^\alpha[K^\alpha] \times \prod_{\alpha \ge \beta} \{x_\alpha\}$ and that $P_\beta[K]=\prod_{\alpha \in \beta} R_\beta^\alpha[K^\alpha] \times \prod_{\alpha \ge \beta} \{x_\alpha\}$. 
Therefore, it is easy to see that $P_\beta[U]$ has nonempty interior in $P_\beta[K]$, since $R_\beta^\alpha: K^\alpha \to R_\beta^\alpha[K^\alpha]$ is semi-open, for every $\alpha \in F \cap \beta$. The result follows from Lemma \ref{RS-skeleton}.
\end{proof}

Given a family of topological spaces $\{X^i: i \in I\}$, we denote by $\bigsqcup_{i \in I} X^i$ its topological sum.

\begin{proposition}
If $\{K^n: n\in\omega\}$ is a family of elements of $\mathcal{RS}$, then the one-point compactification $K=\bigsqcup_{n\in\omega}K^n \cup \{\infty\}$ of $\bigsqcup_{n\in\omega}K^n$ belongs to $\mathcal{RS}$. In particular, every finite topological sum of elements of $\mathcal{RS}$ belongs to $\mathcal{RS}$.
\end{proposition}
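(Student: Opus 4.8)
The plan is to produce, via Lemma \ref{RS-skeleton}, a retractional skeleton on $K$ all of whose retractions are semi-open onto their images, and then to invoke Lemma \ref{RS-skeleton} once more in the converse direction. For each $n \in \omega$, Lemma \ref{RS-skeleton} furnishes a retractional skeleton $(R^n_\alpha)_{\alpha \in \omega_1}$ on $K^n$ such that $R^n_\alpha : K^n \to R^n_\alpha[K^n]$ is semi-open, for every $\alpha \in \omega_1$. Recalling that each $K^n$ is a clopen subset of $K$, I would define, for every $\alpha \in \omega_1$, the map $P_\alpha : K \to K$ by $P_\alpha(x) = R^n_\alpha(x)$ whenever $x \in K^n$ and $P_\alpha(\infty) = \infty$.

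First I would check that $(P_\alpha)_{\alpha \in \omega_1}$ is a retractional skeleton. Being a retraction, as well as conditions (ii), (iii) and (iv) of Definition \ref{def: r-skeleton}, are inherited pointwise from the corresponding properties of the skeletons $(R^n_\alpha)_\alpha$ (and hold trivially at $\infty$), using that convergence inside a clopen $K^n$ is convergence in $K$. Continuity of $P_\alpha$ reduces to continuity on each clopen $K^n$, where $P_\alpha = R^n_\alpha$, together with continuity at $\infty$; the latter holds because $P_\alpha$ maps each basic neighbourhood $\{\infty\} \cup \bigsqcup_{n \geq N} K^n$ of $\infty$ into itself, and these neighbourhoods form a neighbourhood base at $\infty$. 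The point that requires genuine care is that $P_\alpha[K] = \{\infty\} \cup \bigsqcup_n R^n_\alpha[K^n]$ be metrizable: here I would observe that, as a subspace of $K$, it is precisely the one-point compactification of the topological sum $\bigsqcup_n R^n_\alpha[K^n]$ of compact metric spaces; such a sum is locally compact, separable and second countable, and its one-point compactification admits the countable neighbourhood base at $\infty$ given by the sets $\{\infty\} \cup \bigsqcup_{n \geq N} R^n_\alpha[K^n]$, hence it is a second countable compact Hausdorff space, therefore metrizable.

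Next I would verify that $P_\alpha : K \to P_\alpha[K]$ is semi-open. Let $U$ be a nonempty open subset of $K$. If $U$ meets some $K^m$, then $U \cap K^m$ is nonempty and open in $K^m$, so $R^m_\alpha[U \cap K^m]$ has nonempty interior in $R^m_\alpha[K^m]$; since $R^m_\alpha[K^m]$ is clopen in $P_\alpha[K]$, this interior is also open in $P_\alpha[K]$, and it is contained in $P_\alpha[U]$, whence $P_\alpha[U]$ has nonempty interior. The only remaining possibility is $U = \{\infty\}$, which can occur only when $\infty$ is isolated in $K$; in that case $P_\alpha[U] = \{\infty\}$ is open in $P_\alpha[K]$ as well. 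Lemma \ref{RS-skeleton} then yields $K \in \mathcal{RS}$.

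Finally, for the ``in particular'' clause, given a finite family $K^0, \dots, K^{m-1}$ in $\mathcal{RS}$ I would set $K^n$ equal to a singleton for $n \geq m$ (a compact metric space, hence in $\mathcal{RS}$) and apply the main statement: the resulting one-point compactification lies in $\mathcal{RS}$, and the finite sum $K^0 \sqcup \dots \sqcup K^{m-1}$ is a clopen subset of it, so Proposition \ref{hereditary-clopen} gives that it too belongs to $\mathcal{RS}$. The \emph{main obstacle} throughout is the metrizability of the images $P_\alpha[K]$ and the correct handling of the adjoined point $\infty$; the semi-openness, by contrast, follows cleanly from the clopen decomposition of the images.
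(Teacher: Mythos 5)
Your proposal is correct and follows essentially the same route as the paper: the same retractions $P_\alpha$ glued from the skeletons on the clopen pieces $K^n$ with $P_\alpha(\infty)=\infty$, the same metrizability argument for the images via the one-point compactification of a locally compact $\sigma$-compact metric space, the same semi-openness check through a clopen $K^m$ that $U$ meets, and the same reduction of the finite case to Proposition \ref{hereditary-clopen}. Your explicit treatment of the degenerate case $U=\{\infty\}$ is a small extra care the paper omits, but it does not change the argument.
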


\begin{proof}
For every $n \in \omega$, let $(R^{n}_\beta)_{\beta \in \omega_1}$ be the retractional skeleton on $K^n$ given by Lemma \ref{RS-skeleton}. For each $\beta\in\omega_1$, define $P_{\beta}:K\to K$ by $P_\beta (x)= R_{\beta}^{n}(x)$, if $x\in K^n$ and $P_{\beta}(\infty)=\infty$. We claim that $(P_\beta)_{\beta \in \omega_1}$ is a retractional skeleton on $K$. In fact it is straightforward to check that it satisfies conditions (ii), (iii) and (iv) of Definition \ref{def: r-skeleton} and condition (i) follows from the fact that the one-point compactification of a locally compact and $\sigma$-compact metric space is metrizable \cite[Theorem~5.3]{Ke}. Now fix $\beta \in \omega_1$ and let us show that $P_{\beta}:K\to P_{\beta}[K]$ is semi-open. If $U$ is a nonempty open subset of $K$, then there exists $n\in\omega$ such that $U\cap K^n$ is a nonempty open subset of $K^n$. Therefore, the semi-openness of $R_{\beta}^{n}:K^n \to R_{\beta}^{n}[K^n]$ easily implies that $P_\beta[U \cap K^n]$ has nonempty interior in $P_\beta[K]$. Using Lemma \ref{RS-skeleton}, we conclude the proof. Now fix a finite family $\{K^i: i=1, \ldots, k\}$ of elements of $\mathcal{RS}$ and note that $\bigsqcup_{i=1}^{k} K^i$ is a clopen subset of the one-point compactification of the topological sum of a countable family of elements of $\mathcal{RS}$ and therefore, Proposition \ref{hereditary-clopen} ensures that $\bigsqcup_{i=1}^{k} K^i$ belongs to $\mathcal{RS}$.
\end{proof}

Next, we study the stability of $\mathcal{RS}$ for continuous images. In Theorem \ref{continuous-Valdivia-image-belongs-class} we show that if a Valdivia compact space $L$ is a continuous and semi-open image of an element of $\mathcal{RS}$, then $L$ belongs to $\mathcal{RS}$. As a consequence of this result, we conclude in Corollary \ref{closed-semiopen-retraction} that the class $\mathcal{RS}$ is stable under semi-open retractions. Proposition \ref{ft-semiopen-big} is the key to establish Theorem \ref{continuous-Valdivia-image-belongs-class} and its proof relies mostly on the theory of suitable models presented in \cite[Section~3.1]{CCS}. Recall that a subset $T$ of a $\sigma$-complete and up-directed partially ordered set $\Sigma$ is said to be \emph{$\sigma$-closed} in $\Sigma$ if the supremum of $M$ belongs to $T$, for every countable and up-directed subset $M$ of $T$. Clearly if $T$ is an up-directed and $\sigma$-closed subset of $\Sigma$ and $\mathbb{S}$ is a $\sigma$-complete inverse system indexed in $\Sigma$, then $\mathbb{S}|_T$ is also $\sigma$-complete.

\begin{proposition}\label{ft-semiopen-big}
Let $\mathbb{S}_1=\big((K_s)_{s \in \Sigma}, (p_{s}^{t})_{s \le t}\big)$ and $\mathbb{S}_2=\big((L_s)_{s \in \Sigma}, (q_{s}^{t})_{s \le t}\big)$ be $\sigma$-complete inverse systems of compact metric spaces. Let $\big(K, (p_s)_{s \in \Sigma}\big)$ be the inverse limit of $\mathbb{S}_1$ and $\big(L, (q_s)_{s \in \Sigma}\big)$ be the inverse limit of $\mathbb{S}_2$. If $f:K \to L$ is a continuous and semi-open map, then there exists a cofinal and $\sigma$-closed subset $T$ of $\Sigma$ such that for every $t \in T$, there exists a continuous and semi-open map $f_t:K_t \to L_t$ such that $q_t \circ f=f_t \circ p_t$.
\end{proposition}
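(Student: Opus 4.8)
The plan is to run a suitable-model argument in the spirit of \cite[Section~3.1]{CCS}. I would fix a sufficiently large structure containing $\Sigma$, $\mathbb{S}_1$, $\mathbb{S}_2$, the limit spaces $K,L$, their projections $(p_s)_{s\in\Sigma}$, $(q_s)_{s\in\Sigma}$ and the map $f$, and consider the associated family of countable suitable models $M$. For such an $M$ the set $\Sigma\cap M$ is up-directed, and being countable it has a supremum $t_M:=\sup(\Sigma\cap M)\in\Sigma$ by $\sigma$-completeness. I would then set $T:=\{t_M\colon M\text{ suitable}\}$; the standard properties of suitable models guarantee that $T$ is cofinal and $\sigma$-closed in $\Sigma$. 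The crucial structural input is that, by $\sigma$-completeness of the two systems, for $t=t_M$ both $\big(K_t,(p_s^t)_{s\in\Sigma\cap M}\big)$ and $\big(L_t,(q_s^t)_{s\in\Sigma\cap M}\big)$ are the inverse limits of $\mathbb{S}_1|_{\Sigma\cap M}$ and $\mathbb{S}_2|_{\Sigma\cap M}$; consequently $K_t$ has a base of nonempty open sets of the form $(p_s^t)^{-1}[B]$ and $L_t$ a base of the form $(q_s^t)^{-1}[C]$, with $s\in\Sigma\cap M$ and $B\subset K_s$, $C\subset L_s$ basic open.

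Next I would construct the factor map $f_t$ for $t=t_M\in T$. For each $s\in\Sigma\cap M$ the composition $q_s\circ f\colon K\to L_s$ is a continuous map into a compact metric, hence second countable, space; the factorization property of $\sigma$-complete systems says such a map factors through a single projection $p_r$, and reflecting the corresponding existential statement into $M$ produces a witness $r\in\Sigma\cap M$. Since $r\le t$ and $p_r=p_r^t\circ p_t$, the map $q_s\circ f$ factors through $p_t$ for every $s\in\Sigma\cap M$. As $L_t$ is the inverse limit over $s\in\Sigma\cap M$, this means that $q_t\circ f$ is constant on the fibres of $p_t$, so it factors as $q_t\circ f=f_t\circ p_t$ for a well-defined $f_t\colon K_t\to L_t$. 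Because $p_t$ is a continuous surjection of compacta, it is a quotient map, whence $f_t$ is continuous.

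The heart of the matter is the semi-openness of $f_t$, and here I would exploit that the relevant witnesses can be located inside $M$. It suffices to treat a basic open set $V=(p_s^t)^{-1}[B]\neq\emptyset$ of $K_t$, with $s\in\Sigma\cap M$ and $B$ basic open in $K_s$. Since $p_s^t\circ p_t=p_s$, one has $p_t^{-1}[V]=p_s^{-1}[B]$, a nonempty basic open subset of $K$ coded by data from $M$. Semi-openness of $f$ gives that $f\big[p_s^{-1}[B]\big]$ has nonempty interior in $L$, so it contains a nonempty basic open set; reflecting this into $M$ yields $u\in\Sigma\cap M$ and a basic open $C\subset L_u$ with $\emptyset\neq q_u^{-1}[C]\subset f\big[p_s^{-1}[B]\big]$. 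Now the decisive point is that $u\le t$: setting $C':=(q_u^t)^{-1}[C]$, a nonempty basic open subset of $L_t$, and using $q_u^t\circ q_t=q_u$, we get $q_t^{-1}[C']=q_u^{-1}[C]\subset f\big[p_t^{-1}[V]\big]$, and applying the surjection $q_t$ together with $q_t\circ f=f_t\circ p_t$ gives $C'=q_t\big[q_t^{-1}[C']\big]\subset q_t\big[f[p_t^{-1}[V]]\big]=f_t[V]$. Thus $f_t[V]$ has nonempty interior, and since such $V$ form a base of $K_t$, the map $f_t$ is semi-open.

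I expect the main obstacle to be precisely the bookkeeping that keeps every witness inside the model: the factorization step and, above all, the semi-openness step work only because the reflected witnesses $r$ and $u$ belong to $\Sigma\cap M$ and hence lie below $t=\sup(\Sigma\cap M)$, which is what allows one to transport the interior witness $C\subset L_u$ up to a genuine basic open set $C'\subset L_t$. Verifying that $T$ is cofinal and $\sigma$-closed and that the chosen bases behave well under the suitable-model operations is routine within the framework of \cite[Section~3.1]{CCS}; the conceptual content is the interior-transfer computation of the third paragraph.
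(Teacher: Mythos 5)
Your reflection argument for the semi-openness of $f_t$ at a model supremum $t=t_M$ is essentially identical to the computation in the paper (there it appears as the proof that $\delta=\sup(\Sigma\cap M)$ satisfies the defining property of $T$), and your construction of $f_t$ by factoring $q_s\circ f$ through the projections is a legitimate substitute for the paper's appeal to \cite[Proposition~2.2]{KM}. The genuine gap is in your treatment of $\sigma$-closedness. You define $T=\{\sup(\Sigma\cap M)\colon M \text{ suitable}\}$ and assert that ``standard properties of suitable models'' make this set $\sigma$-closed in $\Sigma$. That is not standard and is not justified: given an increasing sequence $t_n=\sup(\Sigma\cap M_n)$ in $T$, the models $M_n$ need not form a chain, so their union is not an elementary submodel, and the Skolem hull of $\bigcup_n(\Sigma\cap M_n)$ together with the parameters may very well contain elements of $\Sigma$ that are not below $\sup_n t_n$ (a finite tuple drawn from $\bigcup_n(\Sigma\cap M_n)$ need not lie in any single $M_n$, since $\Sigma\cap M_n$ is not downward closed in $\Sigma$; the familiar closure argument for $\{M\cap\omega_1\}$ uses exactly that downward closure and does not transfer to a general $\sigma$-complete poset). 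So there is no reason why $\sup_n t_n$ should again be of the form $\sup(\Sigma\cap M)$. This is precisely the delicate point of the whole proposition: the paper devotes a Remark and Example~\ref{T-not-complete} to the fact that a carelessly chosen $T$ here can fail to be $\sigma$-closed.

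The paper's fix, which you should adopt, is to define $T$ by an \emph{intrinsic} property of $t$ rather than through the models: $t\in T$ iff for every nonempty open $U\subset K_t$ there is a nonempty open $V\subset L_t$ with $q_t^{-1}[V]\subset f\big[p_t^{-1}[U]\big]$. The models are then used only to prove cofinality (your third paragraph shows exactly that every $t_M$ has this property), while $\sigma$-closedness is proved directly: if $t=\sup_n t_n$ with each $t_n\in T$, then by $\sigma$-completeness the sets $(p_{t_n}^{t})^{-1}[W]$ form a basis of $K_t$, so any nonempty open $U\subset K_t$ contains some $(p_{t_n}^{t})^{-1}[W]$, the witness $V'\subset L_{t_n}$ provided by $t_n\in T$ lifts to $V:=(q_{t_n}^{t})^{-1}[V']$, and $q_t^{-1}[V]=q_{t_n}^{-1}[V']\subset f\big[p_{t_n}^{-1}[W]\big]\subset f\big[p_t^{-1}[U]\big]$. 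You already have all the ingredients for this (it is the same interior-transfer computation as in your semi-openness step, run at a supremum of elements of $T$ instead of at a model supremum); you just need to reorganize the proof around the intrinsic definition of $T$.
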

\begin{proof}
Using \cite[Proposition~2.2]{KM}, we may assume without loss of generality that for every $s \in \Sigma$, there exists a continuous map $f_s: K_s \to L_s$ satisfying $q_s \circ f=f_s \circ p_s$. Define:
\[T=\{t \in \Sigma: \forall U \subset K_t \ \text{open, } U\neq\emptyset,\ \exists V \subset L_t \ \text{open, } V\neq\emptyset,\ \text{s.t.}\ q_t^{-1}[V] \subset f\big[p_t^{-1}[U]\big]\}.\]
It is easy to see that $f_t$ is semi-open, for every $t \in T$.
To see that $T$ is $\sigma$-closed in $\Sigma$, let $(t_n)_{n \in \omega}$ be an increasing sequence of elements of $T$ and $t=\sup_{n \in \omega} t_n \in \Sigma$. Since $\mathbb{S}_1$ is $\sigma$-complete, we have that $\big(K_t, (p_{t_n}^{t})_{n \in \omega}\big)$ is the inverse limit of $\mathbb{S}_1|_{\{t_n: n \in \omega\}}$ and therefore \cite[Proposition~2.5.5]{E} ensures that $\bigcup_{n \in \omega}\{(p_{t_n}^{t})^{-1}[W]: W \subset K_{t_n} \ \text{is open}\}$ is an open basis of $K_t$. Thus, fixed a nonempty open subset $U$ of $K_t$, there exist $n \in \omega$ and a nonempty open subset $W$ of $K_{t_n}$ such that $(p_{t_n}^{t})^{-1}[W] \subset U$. Since $t_n \in T$, there exists a nonempty open subset $V'$ of $L_{t_n}$ such that $q_{t_n}^{-1}[V'] \subset f \big[p_{t_n}^{-1}[W]\big]$. It is easy to see that if $V:=(q_{t_n}^{t})^{-1}[V']$, then $q_t^{-1}[V] \subset f\big[p_t^{-1}[U]\big]$. Now let us show that $T$ is cofinal in $\Sigma$. Fix $s \in \Sigma$ and set $S'=S \cup \{s, f, K, L, \Sigma, \varphi_K, \varphi_L, \psi_K, \psi_L\}$, where $S$ is the union of the countable sets from the statements of \cite[Lemma 7]{CCS} and \cite[Lemma 8]{CCS} and  $\varphi_K$, $\varphi_L$, $\psi_K$ and $\psi_L$ are the maps defined on $\Sigma$ given by $\varphi_K(t)=p_t$, $\varphi_L(t)=q_t$, $\psi_K(t)=\mathcal{B}_{K_t}$ and $\psi_L(t)=\mathcal{B}_{L_t}$, where $\mathcal{B}_{K_t}$ and $\mathcal{B}_{L_t}$ are fixed countable open basis of $K_t$ and $L_t$, respectively. Let $\Phi$ be the union of the finite lists of formulas from the statements of \cite[Lemma 7]{CCS} and \cite[Lemma 8]{CCS} enriched by the formulas (and their subformulas) marked as (*) in the proof below. According to Skolem's Theorem \cite[Theorem~4]{CCS} there exists a countable set $M$ such that $M \prec (\Phi;S')$. Note that \cite[Lemma 8 (1)]{CCS} implies that there exists $\delta=\sup(\Sigma \cap M) \in \Sigma$ and clearly $s \le \delta$. It follows from the $\sigma$-completeness of $\mathbb{S}_1$ and \cite[Proposition~2.5.5]{E} that to conclude that $\delta \in T$ it is enough to show that for every $t \in \Sigma \cap M$ and every $W \in \mathcal{B}_{K_t}$ nonempty, there exists a nonempty open subset $V$ of $L_\delta$ such that $q_\delta^{-1}[V] \subset f\big[p_{t}^{-1}[W]\big]$. Fix $t \in M \cap \Sigma$ and $W \in \mathcal{B}_{K_t}$ with $W \ne \emptyset$. Note that it follows from \cite[Lemma 7 (2)]{CCS} and \cite[Lemma 7 (4)]{CCS} that $W \in M$. Since $f$ is semi-open, if $U:=(p_{t}^{\delta})^{-1}[W]$, then there exists a nonempty open subset $W'$ of $L$ such that $W' \subset f\big[p_{\delta}^{-1}[U]\big]$ and thus $W' \subset f\big[p_t^{-1}[W]\big]$. We claim that we may assume that $W' \in M$. Indeed, consider the following formula
\[
\exists W' \subset L \ \text{open} \quad \big(W' \ne \emptyset \ \text{and} \ W' \subset f[p_t^{-1}[W]]\big).  \eqno{(*)}
\]
Since all free variables of this formula belong to $M$, by absoluteness we conclude that there exists $W' \in M$ satisfying $(*)$.
It follows from the fact that $\big(L, (q_s)_{s \in \Sigma}\big)$ is the inverse limit of $\mathbb{S}_2$ and \cite[Proposition~2.5.5]{E} that there exist $r \in \Sigma$ and $A \in \mathcal{B}_{L_r}$ such that $A \ne \emptyset$ and $q_r^{-1}[A] \subset W'$. Note that the absoluteness of the following formula (and its subformulas) for $M$ ensures that we may assume that $r \in M$
\[
\exists r \in \Sigma \quad \big(\exists A \in \mathcal{B}_{L_r} \ \text{s.t.}\ A \ne \emptyset \ \text{and} \ q_r^{-1}[A] \subset W'\big). \eqno{(*)}
\]
This concludes the proof, since $q_r^{-1}[A]=q_\delta^{-1}\big[(q_{r}^{ \delta})^{-1}[A]\big]$.
\end{proof}

\begin{remark}It is worth mentioning that Proposition \ref{ft-semiopen-big} is a version of \cite[Proposition~2.2]{KM} with semi-open maps in place of open ones. The definition of  $T$ in our proof is more technical than the one in \cite[Proposition~2.2]{KM}, since we have to circumvent one flaw present there. Indeed, the set $T$ defined there might fail to be $\sigma$-closed; we offer an instance of this phenomenon in Example \ref{T-not-complete} below. The statement of \cite[Proposition~2.2]{KM} is in any case correct and it can be proved with an argument similar to ours above.
\end{remark}

\begin{example}\label{T-not-complete}
For every $n \ge 1$, define $K_n=\{-1, -1/2, \ldots, -1/n,0, 1/n, \ldots, 1/2, 1\}$, endowed with the discrete topology and for every $n \le m$, define $p_{n}^{m}: K_m \to K_n$ as $p_{n}^{m}(x)=x$, if $x \in K_n$ and $p_{n}^{m}(x)=0$, otherwise. Set $K_\omega=\{-1/n: n \ge 1\} \cup \{0\} \cup \{1/n: n \ge 1\}$, endowed with the subspace topology of $\mathbb R$ and for each $n \in \omega$, let $p_{n}^{\omega}: K_\omega \to K_n$ be given by $p_{n}^{\omega}(x)=x$, if $x \in K_n$ and $p_{n}^{\omega}(x)=0$, otherwise. Finally, set $K_{\omega+1}=K_\omega$, let $p_{\omega}^{\omega+1}$ be the identity of $K_\omega$ and $p_{n}^{\omega+1} =p_{n}^{\omega}$, for every $n \in \omega$. It is easy to see that $\mathbb{S}_1=\big((K_{\alpha})_{\alpha \in [1, \omega+1]}, (p_{\alpha}^{\beta})_{\alpha \le \beta}\big)$ is a $\sigma$-complete inverse system whose inverse limit is $\big(K_{\omega+1}, (p_{\alpha}^{\omega+1})_{\alpha \in [1, \omega+1]}\big)$. Now for every $n \ge 1$, define $L_n=\{0, 1/n, \ldots, 1/2, 1\}$, endowed with the discrete topology and for every $n \le m$, define $q_{n}^{m}: L_m \to L_n$ as $q_{n}^{m}(x)=x$, if $x \in L_n$ and $q_{n}^{m}(x)=0$, otherwise. 
Set $L_\omega=\{0\} \cup \{1/n: n \ge 1\}$, endowed with the subspace topology of $\mathbb R$ and for each $n \in \omega$, define $q_{n}^{\omega}: L_\omega \to L_n$ as $q_{n}^{\omega}(x)=x$, if $x \in L_n$ and $q_{n}^{\omega}(x)=0$, otherwise. Set $L_{\omega+1}=K_{\omega+1}$ and let $q_{\omega}^{\omega+1}: L_{\omega+1} \to L_{\omega}$ be given by $q_{\omega}^{\omega+1}(x)=x$, if $x \in L_\omega$ and $q_{\omega}^{\omega+1}(x)=0$, otherwise. Finally, define $q_{n}^{\omega+1}= q_{n}^{\omega}\circ q_{\omega}^{\omega+1}$, for every $n \in \omega$. It is easy to see that $\mathbb{S}_2=\big((L_{\alpha}) _{\alpha \in [1, \omega+1]}, (q_{\alpha}^{\beta})_{\alpha \le \beta}\big)$ is a $\sigma$-complete inverse system whose inverse limit is $\big(L_{\omega+1}, (q_{\alpha}^{\omega+1})_{\alpha \in [1, \omega+1]}\big)$. Now let $f:K_{\omega+1} \to L_{\omega+1}$ be the identity of $K_{\omega+1}$ and, as in \cite[Proposition~2.2]{KM}, set:
\[T=\{\alpha \in [1,\omega+1]: \exists f_\alpha:K_\alpha \to L_\alpha \ \text{such that $f_\alpha$ is open and} \ q_{\alpha}^{\omega+1} \circ f=f_\alpha \circ p_{\alpha}^{\omega+1}\}.\]
Note that $\omega \subset T$, since for every $n \in \omega$, the open map $f_n: K_n \to L_n$ given by $f_n(x)=x$, if $x \in L_n$ and $f_n(x)=0$, otherwise, satisfies $q_{n}^{\omega+1} \circ f=f_{n} \circ p_{n}^{ \omega+1}$. However, $\omega=\sup_{n \in \omega} n$ does not belong to $T$, since $q_{\omega}^{\omega+1}$ is not open and this is the only map that could witness that $\omega \in T$. Therefore $T$ is not $\sigma$-closed in $[1, \omega+1]$.
\end{example}

\begin{corollary}\label{qt-semiopen}
Let $\mathbb{S}_1=\big((K_s)_{s \in \Sigma}, (p_{s}^{t})_{s \le t}\big)$ and $\mathbb{S}_2=\big((L_s)_{s \in \Sigma}, (q_{s}^{t})_{s \le t}\big)$ be $\sigma$-complete inverse systems of compact metric spaces. Let $\big(K, (p_s)_{s \in \Sigma}\big)$ be the inverse limit of $\mathbb{S}_1$, $\big(L, (q_s)_{s \in \Sigma}\big)$ be the inverse limit of $\mathbb{S}_2$ and $f:K \to L$ be a continuous, onto and semi-open map. If $p_{s}^{t}$ is semi-open, for every $s \le t$, then there exists a cofinal and $\sigma$-closed subset $T$ of $\Sigma$ such that $q_{s}^{t}$ is semi-open, for every $s, t \in T$ with $s \le t$.
\end{corollary}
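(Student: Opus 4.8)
The plan is to feed the maps produced by Proposition~\ref{ft-semiopen-big} into the ``cancellation'' Lemma~\ref{quotient-semiopen}. First I would apply Proposition~\ref{ft-semiopen-big} to the given $f\colon K\to L$, obtaining a cofinal and $\sigma$-closed subset $T$ of $\Sigma$ together with continuous and semi-open maps $f_t\colon K_t\to L_t$ satisfying $q_t\circ f=f_t\circ p_t$ for every $t\in T$. This $T$ is exactly the set we shall deliver in the statement, so the bulk of the work is already packaged in the Proposition; it remains only to read off the semi-openness of the bonding maps $q_s^t$ from the maps $f_t$.

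Before that, I would record two elementary facts. Each $f_t$ is \emph{onto}: since $f$ and the projection $q_t$ are onto, the map $q_t\circ f=f_t\circ p_t$ is onto, and as $p_t$ is onto this forces $f_t[K_t]=L_t$. Moreover, the composition of two semi-open maps is again semi-open, since a nonempty open set is carried to a set with nonempty interior, and that interior is in turn carried to a set with nonempty interior.

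Next I would fix $s\le t$ in $T$ and combine the cone relations $p_s=p_s^t\circ p_t$ and $q_s=q_s^t\circ q_t$ with the intertwining identities for $f_s$ and $f_t$ to get
\[
f_s\circ p_s^t\circ p_t = f_s\circ p_s = q_s\circ f = q_s^t\circ q_t\circ f = q_s^t\circ f_t\circ p_t.
\]
Since $p_t\colon K\to K_t$ is onto, I may cancel it on the right and obtain the commuting relation $f_s\circ p_s^t=q_s^t\circ f_t$ on $K_t$. By hypothesis $p_s^t$ is semi-open and, by the choice of $T$, $f_s$ is semi-open, so the composite $f_s\circ p_s^t$ is semi-open.

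Finally I would invoke Lemma~\ref{quotient-semiopen} with $X=K_t$, $Y=L_t$ and $Z=L_s$, taking the continuous onto map to be $f_t$, the semi-open map to be $h=f_s\circ p_s^t$, and $\bar h=q_s^t$; the identity $q_s^t\circ f_t=f_s\circ p_s^t$ is precisely $\bar h\circ f_t=h$, so the Lemma yields that $q_s^t$ is semi-open. As $s\le t$ in $T$ were arbitrary, this proves the corollary. I do not expect a genuine obstacle here, since the real content is concentrated in Proposition~\ref{ft-semiopen-big}; the only points requiring care are the surjectivity of $f_t$ (needed both to cancel $p_t$ and to apply Lemma~\ref{quotient-semiopen}) and the stability of semi-openness under composition.
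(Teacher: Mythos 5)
Your proof is correct, and it takes a route that is parallel to, but genuinely distinct from, the paper's. The paper also starts from Proposition~\ref{ft-semiopen-big} and the cancellation Lemma~\ref{quotient-semiopen}, but it applies the cancellation at the level of the inverse limits: it first notes that $q_t\circ f=f_t\circ p_t$ is semi-open (using Lemma~\ref{ps-semiopen-equiv-pst-semiopen} to get semi-openness of the projections $p_t$ from the hypothesis on the $p_s^t$), cancels $f$ to conclude that each projection $q_t$ with $t\in T$ is semi-open, and then invokes the other implication of Lemma~\ref{ps-semiopen-equiv-pst-semiopen} for the system $\mathbb{S}_2|_T$ to pass from the projections $q_t$ to the bonding maps $q_s^t$. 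You instead work entirely at the level of the approximating metric spaces: you derive the intertwining identity $f_s\circ p_s^t=q_s^t\circ f_t$ by cancelling the surjection $p_t$, and then cancel $f_t$ to get semi-openness of $q_s^t$ directly. Your version bypasses Lemma~\ref{ps-semiopen-equiv-pst-semiopen} entirely (in particular the fact that $\bigl(L,(q_t)_{t\in T}\bigr)$ is the inverse limit of $\mathbb{S}_2|_T$), at the price of two extra observations that you correctly supply: the surjectivity of $f_t$, needed both to cancel $p_t$ and to apply Lemma~\ref{quotient-semiopen}, and the stability of semi-openness under composition. Both arguments are sound and of comparable length; the paper's is marginally more economical because Lemma~\ref{ps-semiopen-equiv-pst-semiopen} is already available and does the projection-to-bonding-map transfer in one stroke, while yours is more self-contained and makes the commuting square between the two systems explicit.
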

\begin{proof}
Let $T$ be the cofinal and $\sigma$-complete subset of $\Sigma$ given by Proposition \ref{ft-semiopen-big}. 
Note that for every $t \in T$, it holds that $q_t \circ f=f_t \circ p_t$ is semi-open, since Lemma \ref{ps-semiopen-equiv-pst-semiopen} ensures that $p_t$ is semi-open and composition of semi-open maps is semi-open. Thus it follows from Lemma \ref{quotient-semiopen} applied to $q=f$, $h=q_t \circ f$ and $\bar{h}=q_t$ that $q_t$ is semi-open, for every $t \in T$. Finally, the result follows from Lemma \ref{ps-semiopen-equiv-pst-semiopen}, since $\big(L, (q_{t})_{t \in T}\big)$ is the inverse limit of $\mathbb{S}_2|_T$.
\end{proof}

\begin{theorem}\label{continuous-Valdivia-image-belongs-class}
Let $K$ be an element of $\mathcal{RS}$ and $L$ be a Valdivia compact space. If there exists a continuous, onto and semi-open map $f:K \to L$, then $L$ belongs to $\mathcal{RS}$.
\end{theorem}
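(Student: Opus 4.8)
The plan is to exhibit $K$ and $L$ as inverse limits of $\sigma$-complete inverse systems of compact metric spaces indexed by the \emph{same} set $\omega_1$, and then to invoke Corollary \ref{qt-semiopen} in order to transfer the semi-openness of the bonding maps from the system for $K$ to a cofinal part of the system for $L$. First I would dispose of the metrizable case: since $f$ is continuous and onto, $w(L)\le w(K)\le\omega_1$, and if $w(L)\le\omega$ then $L$ is compact metric, hence already belongs to $\mathcal{RS}$. So assume $w(L)=\omega_1$. By the very definition of $\mathcal{RS}$, write $K$ as the inverse limit of a continuous inverse system $\mathbb{S}_1=\big((K_\alpha)_{\alpha\in\omega_1},(p_\alpha^\beta)_{\alpha\le\beta}\big)$ of compact metric spaces whose bonding maps are semi-open retractions; since $\omega_1$ is $\sigma$-complete and $\mathbb{S}_1$ is continuous, $\mathbb{S}_1$ is in particular $\sigma$-complete.

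Because $L$ is Valdivia it carries a retractional skeleton, so Lemma \ref{inverse-system-canonical-retractions} applied to $L$ with $\kappa=w(L)=\omega_1$ furnishes a continuous (hence $\sigma$-complete) inverse system $\mathbb{S}_2=\big((L_\alpha)_{\alpha\in\omega_1},(q_\alpha^\beta)_{\alpha\le\beta}\big)$ of compact spaces, with retraction bonding maps, such that $L$ is its inverse limit and $w(L_\alpha)\le\max(\omega,\vert\alpha\vert)=\omega$; thus every $L_\alpha$ is compact metric. This is the only place where the Valdivia hypothesis is used, and it is exactly what guarantees that the bonding maps of $\mathbb{S}_2$ are retractions.

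Now $\mathbb{S}_1$ and $\mathbb{S}_2$ are $\sigma$-complete inverse systems of compact metric spaces over the common index set $\omega_1$, the map $f\colon K\to L$ is continuous, onto and semi-open, and each $p_\alpha^\beta$ is semi-open. Hence Corollary \ref{qt-semiopen} yields a cofinal and $\sigma$-closed subset $T\subseteq\omega_1$ such that $q_\alpha^\beta$ is semi-open for all $\alpha\le\beta$ in $T$. I emphasise that no a priori compatibility between the two systems is required: the simultaneous factorization of $f$ through both systems is produced internally by Proposition \ref{ft-semiopen-big}, and this is where the real work (the elementary-submodel machinery) takes place.

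It remains to recognise $\mathbb{S}_2|_T$ as a witness of membership in $\mathcal{RS}$. A cofinal $\sigma$-closed subset of $\omega_1$ is a club, hence order-isomorphic to $\omega_1$; after reindexing along this isomorphism, $\mathbb{S}_2|_T$ is a continuous inverse system of compact metric spaces indexed by $\omega_1$, its bonding maps $q_\alpha^\beta$ (for $\alpha\le\beta$ in $T$) are retractions which are now semi-open, and $L$ is still its inverse limit because $T$ is cofinal. Continuity of $\mathbb{S}_2|_T$ is inherited from $\mathbb{S}_2$: for an up-directed $T'\subseteq T$ with supremum $t\in T$, the cone $\big(L_t,(q_s^t)_{s\in T'}\big)$ is already the inverse limit of $\mathbb{S}_2|_{T'}$ inside $\mathbb{S}_2$. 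Thus $\mathbb{S}_2|_T$ satisfies the definition of $\mathcal{RS}$ (alternatively one may appeal to Corollary \ref{weaker-definition} on successive bonding maps), and $L\in\mathcal{RS}$. The single genuine obstacle in the whole argument is Corollary \ref{qt-semiopen}, resting on Proposition \ref{ft-semiopen-big}; granting it, the remainder is the bookkeeping of choosing compatible representations of $K$ and $L$ and of checking that restriction to the club $T$ preserves continuity and the retraction property.
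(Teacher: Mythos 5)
Your proposal is correct and follows essentially the same route as the paper: reduce to the case $w(L)=\omega_1$, represent $L$ as the inverse limit of a continuous inverse system of compact metric spaces with retraction bonding maps, apply Corollary \ref{qt-semiopen} to obtain a club $T\subset\omega_1$ on which the bonding maps become semi-open, and reindex $\mathbb{S}_2|_T$ along an order isomorphism $T\cong\omega_1$. The only (immaterial) difference is that the paper obtains the system for $L$ by citing \cite[Proposition~2.6]{KM} directly, whereas you derive it from Lemma \ref{inverse-system-canonical-retractions} via a retractional skeleton on the Valdivia space $L$.
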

\begin{proof}
Let $\mathbb{S}_1=\big((K_\alpha)_{\alpha \in \omega_1}, (p_{\alpha}^{\beta})_{\alpha \le \beta}\big)$ be a continuous inverse system of compact metric spaces such that $p_{\alpha}^{\beta}$ is a semi-open retraction, for every $\alpha \le \beta$ and let $\{p_\alpha:K \to K_\alpha, \alpha \in \omega_1\}$ be such that $\big(K, (p_\alpha)_{\alpha \in \omega_1}\big)$ is the inverse limit of $\mathbb{S}_1$. 
Note that $w(L) \le w(K) \le \omega_1$ and therefore, the only interesting case is when the weight of $L$ is $\omega_1$. It follows from \cite[Proposition 2.6]{KM} that there exist a continuous inverse system of compact metric spaces $\mathbb{S}_2=\big((L_\alpha)_{\alpha \in \omega_1}, (q_{\alpha}^{\beta})_{\alpha \le \beta}\big)$ such that each $q_{\alpha}^{\beta}$ is a retraction and a family $\{q_\alpha:L \to L_\alpha, \alpha \in \omega_1\}$ such that $\big(L, (q_\alpha)_{\alpha \in \omega_1}\big)$ is the inverse limit of $\mathbb{S}_2$. Let $T$ be the cofinal and $\sigma$-complete subset of $\omega_1$ given by Corollary \ref{qt-semiopen}. 
Since $L$ is the inverse limit of $\mathbb{S}_2|_T$, $T$ is order-isomorphic to $\omega_1$ and $\mathbb{S}_2|_{T}$ is continuous, we conclude that $L$ belongs to $\mathcal{RS}$.
\end{proof}

\begin{corollary}\label{closed-semiopen-retraction}
Let $K$ be an element of $\mathcal{RS}$, $L$ be a compact space and $f:K \to L$ be a continuous map.
\begin{enumerate}
    \item If $f$ is a semi-open retraction, then $L$ belongs to $\mathcal{RS}$;
    
    \item If $L$ is zero-dimensional and $f$ is an open surjection, then $L$ belongs to $\mathcal{RS}$.
\end{enumerate}
\end{corollary}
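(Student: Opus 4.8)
The plan is to deduce both items from Theorem~\ref{continuous-Valdivia-image-belongs-class}. Since $K\in\mathcal{RS}$ holds by hypothesis, in each case it suffices to verify that $f$ is continuous, onto and semi-open, and that the target $L$ is Valdivia; once these are in place, Theorem~\ref{continuous-Valdivia-image-belongs-class} immediately yields $L\in\mathcal{RS}$. Checking that $f$ is continuous, onto and semi-open will be essentially immediate in both cases, so the real content lies in establishing that $L$ is Valdivia. This is precisely where the two different hypotheses on $f$ enter, and it is where I expect the main difficulty to be, since Valdivia compactness is \emph{not} preserved under arbitrary retractions nor under arbitrary continuous images.

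For item (1), a semi-open retraction $f\colon K\to L$ is by definition continuous and semi-open, and it is onto because a retraction is surjective; thus the three requirements on $f$ hold. It remains to see that $L$ is Valdivia. Here I would exploit that $K$, being a member of $\mathcal{RS}$, is in particular semi-Eberlein and hence Valdivia. The idea is that the class of semi-Eberlein compacta is stable under semi-open retractions (this is the stability phenomenon for semi-Eberlein compacta isolated in \cite{KL}): a semi-open retract of a semi-Eberlein compactum is again semi-Eberlein. Applying this to $f$ gives that $L$ is semi-Eberlein, in particular Valdivia, and Theorem~\ref{continuous-Valdivia-image-belongs-class} then finishes the argument. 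Concretely, one transports the retractional skeleton of $K$ furnished by Lemma~\ref{RS-skeleton} to $L$, using that the semi-open retraction $f$ interacts well with the dense induced set; the delicate point is to confirm that the pushed-forward family is again a retractional skeleton, i.e.\ that the relevant limits and commutation relations survive, which is exactly what semi-openness is there to guarantee.

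For item (2), an open surjection is in particular continuous, onto and semi-open, so once more the hypotheses on $f$ are satisfied and the only missing ingredient is that $L$ be Valdivia. This time the leverage comes from the assumption that $L$ is zero-dimensional: for zero-dimensional targets one can lift clopen partitions of $L$ along the open surjection $f$ and thereby transfer the Valdivia (retractional-skeleton) structure of $K$ to $L$, so that an open, zero-dimensional image of a Valdivia compactum is Valdivia. An alternative route I would try first is to produce a continuous section $g\colon L\to K$ of $f$, zero-dimensionality of $L$ being precisely the feature that makes selection arguments available; then $g$ is a homeomorphic embedding and $r:=g\circ f$ is a retraction of $K$ onto $g[L]\cong L$ which is semi-open, since $f$ is open and $g$ is a relatively open embedding (so $r[U]=g[f[U]]$ is open in $g[L]$ for every open $U\subseteq K$). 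This would reduce item (2) to item (1). Either way, once $L$ is known to be Valdivia, Theorem~\ref{continuous-Valdivia-image-belongs-class} applies and gives $L\in\mathcal{RS}$.

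The crux in both items is therefore the same: showing that $L$ is Valdivia. For item (1) this rests on the stability of semi-Eberlein compacta under semi-open retractions, and for item (2) on the good behaviour of open maps onto zero-dimensional spaces; in each case the corresponding hypothesis on $f$ (semi-openness of the retraction, respectively openness together with zero-dimensionality of $L$) is exactly what rescues Valdivia compactness, which would otherwise fail to pass to $L$. The remaining bookkeeping—that $f$ is continuous, onto and semi-open—is routine, and the passage from ``$L$ Valdivia'' to ``$L\in\mathcal{RS}$'' is handled entirely by Theorem~\ref{continuous-Valdivia-image-belongs-class}.
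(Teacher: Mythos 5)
Your overall reduction is exactly the one the paper uses: in both items one checks that $f$ is continuous, onto and semi-open (trivial) and that $L$ is Valdivia, and then Theorem~\ref{continuous-Valdivia-image-belongs-class} finishes the job. The divergence, and the problem, is in how you establish that $L$ is Valdivia. The paper does this in one line by invoking \cite[Theorem~4.4]{KM}, which states precisely that the class of Valdivia compacta is stable under semi-open retractions and under open images that are zero-dimensional; your attempt replaces this citation with arguments that do not hold up.

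For item (1), you assert that the class of semi-Eberlein compacta is stable under semi-open retractions and attribute this to \cite{KL}. That is not a result of \cite{KL}: the stability of semi-Eberlein compacta under retractions is posed there as a problem, and even the subsequent progress in \cite{CCS} concerns \emph{open} retractions, not semi-open ones. Your ``concrete'' plan of pushing the skeleton from Lemma~\ref{RS-skeleton} forward along $f$ is not an argument --- there is no canonical way to transport a retractional skeleton along a retraction, and this is exactly the nontrivial content of \cite[Theorem~4.4]{KM}. Note also that you are proving more than you need: only Valdivia-ness of $L$ is required, and for Valdivia compacta the semi-open retraction stability \emph{is} a theorem. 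For item (2), your preferred route via a continuous section $g\colon L\to K$ fails: Michael's zero-dimensional selection theorem requires the fibers to live in a completely metrizable space, whereas here $K$ has weight $\omega_1$, and an open surjection onto a zero-dimensional compactum need not admit a continuous section in this generality. (If such a section always existed, item (2) would indeed reduce to item (1), but it does not.) Your fallback of ``lifting clopen partitions'' is again only a gesture towards the proof of \cite[Theorem~4.4]{KM}. In short: replace both ad hoc justifications by the citation of \cite[Theorem~4.4]{KM}, and the proof becomes the paper's.
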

\begin{proof}
Note that \cite[Theorem 4.4]{KM} implies that $L$ is Valdivia in (1) and (2). Therefore the result follows from Theorem \ref{continuous-Valdivia-image-belongs-class}.
\end{proof}


\end{document}